\theoremstyle{plain}
\newtheorem{theorem}{Theorem}
\newtheorem{proposition}[theorem]{Proposition}
\newtheorem{corollary}[theorem]{Corollary}
\newtheorem{lemma}[theorem]{Lemma}
\theoremstyle{definition}
\newtheorem{remark}[theorem]{Remark}
\newcommand {\Z}{\mathbb{Z}}
\newcommand {\CC}{\mathbb{C}}
\newcommand{\be}{\begin{equation}}
\newcommand{\ee}{\end{equation}}
\newcommand{\qdim}[1]{\mathrm{qdim}[#1]}
\newcommand{\Proj}{\mathcal P}
\newcommand{\Ck}{\mathbb{C}_{kr}^H}
\newcommand{\U}{{\overline{U}_q^H(\mathfrak{sl}_2)}}
\newcommand{\UR}{\overline{U}_q(\mathfrak{sl}_2)}
\newcommand{\UU}{{U}_q(\mathfrak{sl}_2)}
\newcommand{\w}{\textsf{w}}
\newcommand{\M}{\mathcal M(r)}
\newcommand{\strip}[1]{\mathbb{S}(#1)} 
\newcommand{\im}{\text{Im}} 
\newcommand{\re}{\text{Re}} 
\renewenvironment{proof}[1][Proof]{\begin{trivlist} \item[\hskip \labelsep {\bfseries #1:}]}{\qed\end{trivlist}}
\begin{document}

\title{Logarithmic Link Invariants of {\small $\U$} and\\  Asymptotic Dimensions of Singlet Vertex Algebras}

\author{Thomas Creutzig, Antun Milas and Matt Rupert } 
\thanks{T.C. was supported by an NSERC Research Grant (RES0020460)}
\thanks{  A.M. was  supported by a Simons Foundation Collaboration Grant ($\#$ 317908)}
\maketitle

\begin{abstract}
{We study relationships between the restricted unrolled quantum group $\U$ at $2r$-th root of unity $q=e^{\pi i/r}, r \geq 2$, and the singlet vertex operator algebra $\M$. }
We use deformable families of modules to efficiently compute $(1, 1)$-tangle invariants colored with projective modules of { $\U$}. These relate to the colored Alexander tangle invariants studied  in \cite{ADO,M1}. It follows that the regularized asymptotic dimensions of characters of $\M$ coincide with the corresponding modified traces of open Hopf link invariants.  We also discuss various categorical properties of $\M$-mod in connection to braided tensor categories.
\end{abstract}

\section{Introduction}

This work is concerned with the restricted "unrolled" quantum group $\U$ at $2r$-th root of unity $q=e^{\pi i/r}, r \geq 2$, and the singlet vertex operator algebra $\M$. Representation categories of both are neither semi-simple nor do they have finitely many simple objects. While this quantum group has been used to construct link and $3$-manifold invariants \cite{BCGP,CGP,CGP2,CMu,GP,GPT,GPT2},  categorical properties of the singlet vertex operator algebra (and more generally, vertex algebras with infinitely many simple objects) are still poorly understood. Previously, it was realized  in examples that modular-like properties of characters \cite{AC, CM, CR1, CR2, CR3}, as well as their {\em asymptotic} dimensions (often also called {\em quantum} dimensions) \cite{CM, CMW}, relate to the fusion ring of the singlet vertex algebras and other vertex (super)algebras.
This begs for a categorical interpretation and in this work the relation of $\M$ and $\U$ gives such an interpretation via open Hopf link invariants. 
In particular, this shows that the Jacobi variable introduced in \cite{CM} as a regularization parameter for the classical false theta functions 
has a novel interpretation from the point of view of quantum topology. 

\subsection{Tensor Categories and Vertex Algebras}

Vertex operator algebras are important sources of braided and modular tensor categories.  If a vertex operator algebra $V$ is regular (i.e. $C_2$-cofinite and rational), together with some additional mild conditions, then it is well-understood \cite{H1, H2} that its representation category is modular and especially ribbon. Moreover in this case, there are three actions of the modular group: on the linear span of torus one-point functions, a categorical one given by twists and Hopf links and the one that diagonalizes the fusion rules (this is only a $S$-matrix). 
All three coincide in the appropriate sense.

If $V$ is not rational but still $C_2$-cofinite (satisfying a few additional assumptions), then there is still a modular group action on the space of one-point functions on the torus \cite{Mi}. Moreover, there is also a modular group action in the category provided it is ribbon \cite{Ly1, Ly2} and a relation between the character $S$-matrix and logarithmic Hopf link invariants has been given in \cite{CG,CG2} for the triplet vertex algebras. Here we would like to extend these observations beyond categories with finitely many simple objects. From the vertex algebra point of view, vertex algebras which are not $C_2$-cofinite are considerably difficult to study. One issue with these algebras is that the category of weak modules  is 
way too big and only after restriction to a subcategory we hope to have good categorical properties. In an important series of papers, Huang, Lepowsky and Zhang 
\cite{HLZ} obtained sufficient conditions on a subcategory ${C}$ to posses a braided (vertex) tensor category structure. Roughly speaking, they proved that if all objects in an abelian subcategory ${C}$ of generalized $V$-modules satisfy the $C_1$-cofiniteness condition, the category is closed under $P(z)$-tensor products, and a few additional conditions, then the convergence and extension properties for products and iterates hold in $C$, and thus the category 
can be endowed with a braided tensor category structure. More precisely,  Assumptions 10.1, 12.1 and 12.2 \cite{HLZ} have to be satisfied. The most difficult part in the verification of these axioms is that a suitable subcategory is closed under the $P(z)$-tensor product. We should mention that Miyamoto recently obtained a sufficient condition on the closure of the tensor product in a $C_1$-cofinite category of modules \cite{Mi1}.  But his result alone does not give a braided tensor product. 

The singlet vertex algebra $\M$, $r \in \mathbb{N}_{\geq 2}$, \footnote{Regretfully, there is no consistent notation for the singlet algebra in the literature; previously it was 
also denoted by $\mathcal{W}(2,2r-1)$ in \cite{CM} as well as by $\overline{M(1)}$ in \cite{AD,AM1}.} is a prominent example among irrational non $C_2$-cofinite vertex algebras and it was studied by many authors \cite{AD, AM1,AM2,AM3,CM, CMW}.  This vertex algebra (subalgebra of the triplet vertex algebra) contains both atypical and typical representations so it serves as the best testing ground for categorical exploration beyond $C_2$-cofiniteness. We have already understood in previous works that asymptotic dimensions of characters relate to representations of the fusion ring \cite{CM, CMW} and our conjecture was that this has a precise categorical meaning.  Here, we will give such an interpretation. It is believed that categories of $\U$-modules and the singlet VOA $\M$-modules are equivalent as monoidal categories and maybe even as braided tensor categories, after  
restriction to suitable sub-categories. This restriction is indeed needed  in order to have a braiding on the quantum group side - one needs the category of {\em weight modules} introduced in \cite{CGP}.  

\subsection{Summary of the present work}

Representations of the tensor ring in a ribbon category are directly given by open logarithmic Hopf link invariants \cite{T} (for a proof in non-strict categories see \cite{CG}). Here, we first successfully compare them with the asymptotic dimensions of characters and secondly we find a novel way of computing them. Previously, they have been computed using the known tensor ring \cite{CGP}, which from our perspective is not ideal as we are seeking ways to better understand the still inaccessible fusion ring of VOAs. Our computation is a deformation argument analogous to ideas of Murakami and Nagatomo \cite{M2, MN} in the case of the restricted quantum group $\UR$ but also motivated by the idea of deformable families of VOAs \cite{CL}. The idea is that if one has a continuous set of modules, all but a discrete set of them semi-simple, then one can construct a deformable family of modules $M(x)$ which specializes to specific modules if specializing the variable $x$ and especially at non-generic position also specializing to the indecomposable but reducible modules of particular interest in logarithmic conformal field theory. Moreover, this process commutes with the computation of invariants of interest such as Hopf links and twists. Since these are next to trivial to compute on simple modules our process gives a nice way of obtaining them also for the complicated cases of indecomposable but reducible modules. 

Our strategy of computation actually extends straight forwardly to any open $(1, 1)$ tangle invariants. It turns out that the results very nicely compare to the colored Alexander invariants introduced by Akutsu, Deguchi and Ohtsuki in \cite{ADO} and further developed by Murakami \cite{M1, MN, M2}. This has already been observed earlier if only simple projective modules were used as colors \cite{GPT}. We would like to stress one important difference between $\U$ and $\UR$. The latter is not braidable and therefore the category of finite-dimensional $\UR$-modules does not have a universal $R$-matrix \cite{KS}.  On the other hand the category of (suitably defined) weight modules for $\U$ is braided. 
We also would like to announce that this problem for $\UR$ can be cured by finding a suitable non-trivial associator in the module category of $\UR$ \cite{GR, CGR}. This associator is found in identifying the module category of $\UR$ as the representation category of local modules of an algebra in $\U$ \cite{CGR}. It remains however to be proven that the resulting module category gives rise to the colored Alexander invariants studied by Murakami.

\subsection{Results}

It is believed that the representation categories of $\U$ and $\M$ are equivalent as monoidal and hopefully as braided tensor categories (we will have a more precise conjecture below). The fusion ring of $\M$ is not known, its Grothendieck ring has been conjectured in \cite{CM}. 
Comparison does work though (see Section \ref{sec:comparison}):
\begin{theorem}
{Irreducible} representations of $\U$ and $\M$ are related  as follows:
\begin{enumerate}
\item Assume that the Grothendieck ring of $\M$ is as conjectured (see Section  \ref{sec:singlet} ).
Let $\alpha \in (\mathbb{C} \setminus \mathbb{Z}) \cup r \mathbb{Z}$. Then the map $\varphi :V_{\alpha} \mapsto F_{\frac{\alpha+r-1}{\sqrt{2r}}}$, $S_i \otimes \Ck \mapsto M_{1-k,i+1}$ between simple objects for $\U$ and $\M$ is bijective and induces a ring isomorphism. 
\item 
Let $k\in\mathbb Z$ and $j\in \{0, \dots, r-2\}$ and let $\epsilon \in \mathbb C$ satisfy $\re(\epsilon)<B_\epsilon^r$ as well as $\epsilon$ in $\mathbb S(k, j+1+r(k+1))$ (for the precise definitions of these sets, see Section \ref{sec:singlet}) then
\[
\text{\em qdim}[\varphi(X)^{\epsilon}]=\frac{t_{P_j \otimes \Ck}(\Phi_{ X ,P_j \otimes \Ck} \circ x_{j,k})}{t_{P_j \otimes \Ck}(\Phi_{S_0,P_j \otimes \Ck} \circ x_{j,k})}, 
\]
and if $\re(\epsilon)>B_\epsilon^r$ then for $\alpha = i\sqrt{2r}\epsilon$ we have
\[
\text{\em qdim}[\varphi(X)^{\frac{-i \alpha}{\sqrt{2r}}}]=\frac{t_{V_{\alpha}}(\Phi_{X,V_{\alpha}})}{t_{V_{\alpha}}(\Phi_{S_0,V_{\alpha}})}.
\]
Here $t_{X}$ is the modified trace on the ideal of negligible (projective) objects $X$.
\end{enumerate}

\end{theorem}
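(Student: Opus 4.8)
The plan is to evaluate both sides explicitly in the typical regime, where all modules in sight are simple, match them there by a direct $R$-matrix and character computation, and then reach the atypical regime by a deformation argument in the spirit of Murakami and Nagatomo \cite{M2,MN}.

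\textbf{The typical regime.} I would first compute the right-hand side of Case 2. For $\alpha\in(\CC\setminus\Z)\cup r\Z$ the module $V_\alpha$ is simple, so Schur's lemma forces the open Hopf link $\Phi_{X,V_\alpha}$ to act as a scalar $s_{X,V_\alpha}\,\mathrm{id}_{V_\alpha}$, which one reads off from the universal $R$-matrix of $\U$ acting on $X\otimes V_\alpha$ as an exponential in $\alpha$. Since $S_0$ is the unit object, $\Phi_{S_0,V_\alpha}=\mathrm{id}_{V_\alpha}$, so the denominator is the modified dimension $t_{V_\alpha}(\mathrm{id}_{V_\alpha})$ and the ratio collapses to the single eigenvalue $s_{X,V_\alpha}$. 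On the vertex algebra side, the character of the typical module $F_\lambda$ is a Gaussian theta-type numerator divided by $\eta(\tau)$, and its regularized asymptotic dimension is governed by the leading $\tau\to 0$ behaviour with the Jacobi parameter $\epsilon$ fixed; a Gaussian/saddle-point evaluation produces the same exponential in $\alpha=i\sqrt{2r}\,\epsilon$. Matching the two exponentials proves the identity whenever $\re(\epsilon)>B_\epsilon^r$.

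\textbf{The atypical regime.} The key point is that the projective module $P_j\otimes\Ck$ together with the distinguished endomorphism $x_{j,k}$ arises as the degeneration of a holomorphic family $V(x)$ of modules that is simple for generic $x$ and specializes to the reducible indecomposable $P_j\otimes\Ck$ at a non-generic value $x_0$. Because the braiding, and hence $\Phi_{X,V(x)}$, varies holomorphically with $x$, and because the modified trace is continuous along such a family, the quantity $t_{P_j\otimes\Ck}(\Phi_{X,P_j\otimes\Ck}\circ x_{j,k})$ is recovered as the first surviving coefficient of the generic scalar expression of Case 2 in the limit $x\to x_0$. Here $x_{j,k}$ plays the role of the regularizing nilpotent that renders the modified trace finite, exactly as $\Phi_{S_0,V_\alpha}=\mathrm{id}$ did in the simple case. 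On the vertex algebra side the matching regime $\re(\epsilon)<B_\epsilon^r$, with $\epsilon$ lying in the strip $\strip{k,\,j+1+r(k+1)}$, is precisely where the regularized character is controlled by the false theta function attached to the atypical block, and the same limiting procedure reproduces the asymptotic dimension.

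The main obstacle is this last matching, because on both sides the naive evaluation at the special point is an indeterminate $0/0$: the modified dimension of $P_j\otimes\Ck$ vanishes, while the false theta function has a removable singularity as $\tau\to 0$. The hard step is to show that the nilpotent $x_{j,k}$ on the quantum group side and the derivative of the false theta function in the regularization parameter on the vertex algebra side resolve these indeterminacies to the same finite number. Concretely, one must verify that the degeneration $x\to x_0$ of the family and the crossing of the boundary $B_\epsilon^r$ as $\epsilon$ moves between the strips $\strip{k,\,j+1+r(k+1)}$ are governed by identical leading asymptotics, so that composing with $x_{j,k}$ inside the modified trace extracts exactly the first non-vanishing Taylor coefficient that the regularized character isolates on the vertex algebra side.
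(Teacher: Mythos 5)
Your overall strategy for part (2) --- explicit eigenvalue matching in the typical regime plus a deformation of $P_j\otimes\Ck$ into a generic semisimple family for the atypical regime --- is essentially the paper's strategy: the paper constructs an explicit family $X_\epsilon$ equal to $V_{1+j-r+\ell r+\epsilon}\oplus V_{-1-j+r+\ell r+\epsilon}$ for $\epsilon\neq 0$ (a direct sum of two typicals, not a simple module, so the invariant of the family is the sum of the two individually colored invariants) and equal to $P_j\otimes\mathbb{C}^H_{\ell r}$ at $\epsilon=0$ (Theorem \ref{thm:deform}), shows limits commute with the ribbon operations, and then compares the resulting Hopf link coefficients (Corollary \ref{cor:logHopf}) with the qdim formulas quoted from \cite{CM,CMW} (Proposition \ref{prop:comp}). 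However, there are two genuine gaps. First, you never address part (1) of the statement: that $\varphi$ is a bijection on simples inducing a \emph{ring isomorphism} between the conjectured Grothendieck ring of $\M$ and the tensor ring of $\U$. This is not a consequence of the trace identities alone without a further argument; the paper deduces it (Corollary \ref{cor:comp}) from the fact that open Hopf links and asymptotic dimensions uniquely determine the respective (conjectured) rings up to equality of characters, invoking Theorem 28 of \cite{CM} and the tensor ring computations of \cite{CGP}. Omitting this leaves half the theorem unproven.

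Second, your account of the atypical regime misidentifies both the mechanism and the difficulty. The modified dimension of $P_j\otimes\Ck$ does \emph{not} vanish: the paper computes $\mathbf{d}(P_j\otimes\mathbb{C}^H_{\ell r})=(-1)^{\ell(r-1)+j+1}\bigl(q^{j+1}+q^{-j-1}\bigr)$, nonzero except when $2(j+1)=r$. Moreover, since $x_{j,k}^2=0$, writing $\Phi_{X,P_j\otimes\Ck}=a\,\mathrm{Id}+b\,x_{j,k}$ gives $t_{P_j\otimes\Ck}(\Phi_{X,P_j\otimes\Ck}\circ x_{j,k})=a\,t_{P_j\otimes\Ck}(x_{j,k})$; composing with the nilpotent \emph{kills} the nilpotent part and extracts the semisimple coefficient $a$, and the factor $t_{P_j\otimes\Ck}(x_{j,k})$ cancels in the ratio with the $S_0$ term. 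The coefficient $a$ is a plain limit of the scalar eigenvalues of $\Phi_{X,V_\lambda}$ along the family --- no $0/0$ occurs there; the L'H\^opital-type indeterminacy arises only for $b$, which is irrelevant to the quantity in the theorem. Likewise, on the vertex algebra side the stripwise quantum dimension of an atypical module is a finite, explicitly known constant from \cite{CMW}, so there is no singularity to resolve and no ``matching of indeterminacies'' to perform: the proof is a direct comparison of two closed formulas. The real work your sketch glosses over lies elsewhere, namely in actually constructing the deformable family (exhibiting the basis, verifying the $\U$-relations, in particular that $EF-FE$ closes correctly as $\epsilon\to 0$) and proving that specialization commutes with braidings, twists, (co)evaluations and the modified trace, which is what legitimizes computing $\Phi_{X,P_j\otimes\Ck}$ by a limit in the first place.
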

If the representation categories of $\U$ and $\M$ are braided equivalent then this means that the asymptotic dimensions of regularized characters of the VOA $\M$ very nicely capture the modified traces of the logarithmic Hopf link invariants. In the continuous regularization regime these are the ordinary modified traces while the stripwise constant regime corresponds to traces of logarithmic Hopf link invariants weighted with the nilpotent endomorphisms. 

In \cite{CM}, the regularized quantum dimensions of modules were used to at least conjecture the Grothendieck ring of $\M$ and in \cite{CGP} the Hopf link invariants of $\U$ were computed using the known tensor ring of $\U$. Our picture is that one should use the Hopf link invariants to compute the tensor ring of $\U$ and we indeed can find a strategy that works well. 
The key Lemma (see Theorem \ref{thm:deform}) is the construction of a deformable family of modules $X_\epsilon$ for $\epsilon$ in $(-\frac{1}{2}, \frac{1}{2})$ such that 
\[
X_{\epsilon}=\left\{ \begin{array}{cc}
V_{1+i-r+\ell r+\epsilon} \oplus V_{-1-i+r+\ell r+\epsilon} & \text{ if }\epsilon \neq 0\\
P_i  \otimes \CC_{\ell r}^H& \text{ if } \epsilon = 0\\
\end{array} \right.\\
\]
This deformable family can be used to compute open Hopf link invariants and more generally open tangle invariants of $(1, 1)$ tangles $T$.
For this we recall Section 3 of \cite{GPT}. Let $L$ be a $\mathcal C$-colored ribbon graph, such that at least one of the colors is a simple $V_\lambda$. 
Let $T_\lambda$ be the colored $(1, 1)$-ribbon graph by cutting the edge belonging to $V_\lambda$. Then the re-normalized Reshetikhin-Turaev link invariant is
\[
F'(L) = t_{V_{\lambda}}(T_\lambda),
\]
where $ t_{V_{\lambda}}$ is the modified trace on $V_\lambda$. 
These invariants where shown in \cite{GPT} to coincide with Murakami's Alexander invariants \cite{M1}. We can now extend these results to any projective module. For this let $L$ be as above but with at least one of the colors the module $P_i\otimes  \CC_{\ell r}^H$. Let $T_{P_i\otimes  \CC_{\ell r}^H}$ be the colored $(1, 1)$-ribbon graph by cutting the edge belonging to $P_i\otimes  \CC_{\ell r}^H$. Let $T_\lambda$ be the colored $(1, 1)$-ribbon graph obtained by replacing the open strand (which is colored with $P_i\otimes  \CC_{\ell r}^H$) with $V_\lambda$. Let $\textbf{d}(X)=t_X(Id_X)$ be the modified dimension of a projective module $X$ and let $x_{i, \ell r}$ be the nilpotent endomorphism of $P_i\otimes  \CC_{\ell r}^H$ normalized as in section \ref{sec:proj}, then (Theorem \ref{thm:links}):
\begin{theorem}
The colored $(1, 1)$-ribbon graph $T_{P_i\otimes  \CC_{\ell r}^H}$ satisfies
\begin{equation}\nonumber
\begin{split}
 t_{P_i\otimes  \CC_{\ell r}^H}\left(T_{P_i\otimes  \CC_{\ell r}^H}\right) &= \lim\limits_{\epsilon \to 0}\left( t_{V_{1+i-r+\ell r+\epsilon}}\left(T_{1+i-r+\ell r+\epsilon}\right) +   t_{V_{-1-i-r+\ell r+\epsilon}}\left(T_{-1-i-r+\ell r+\epsilon}\right) \right)
\end{split}
\end{equation}
and
 \begin{equation}\nonumber
\begin{split}
T_{P_i\otimes  \CC_{\ell r}^H} &=  a {\rm Id}_{P_i \otimes  \CC_{\ell r}^H} +bx_{i, \ell r}
\end{split}
\end{equation}
with coefficients
 \begin{equation}\nonumber
\begin{split} 
a &=  \lim\limits_{\epsilon \to 0} \frac{t_{V_{-1-i-r+\ell r+\epsilon}}\left(T_{-1-i-r+\ell r+\epsilon}\right)}{\textbf{d}\left(T_{-1-i+r+\ell r+\epsilon}\right)} =  \lim\limits_{\epsilon \to 0}  \frac{t_{V_{1+i-r+\ell r+\epsilon}}\left(T_{1+i-r+\ell r+\epsilon}\right)}{\textbf{d}\left(V_{1+i-r+\ell r+\epsilon} \right)}
\qquad\text{and} \\
b &=  \frac{r}{2\pi i \{1+i\}} \left(\frac{d}{d\lambda} \frac{t_{V_{\lambda}}\left(T_{\lambda}\right)}{\textbf{d}\left(T_{\lambda}\right)} \Big|_{\lambda=\ell r+r-i-1} 
 - \frac{d}{d\lambda}\frac{t_{V_{\lambda}}\left(T_{\lambda}\right)}{\textbf{d}\left(V_{\lambda} \right)}\Big|_{\lambda=i+i-r+\ell r} \right). 
\end{split}
\end{equation}
\end{theorem}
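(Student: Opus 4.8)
The plan is to deduce both identities from the deformable family $X_\epsilon$ of Theorem \ref{thm:deform}, whose defining feature is that the renormalized Reshetikhin-Turaev invariant of the colored $(1,1)$-tangle, with $X_\epsilon$ coloring the open strand, is analytic in $\epsilon$ and commutes with the specialization $\epsilon\to 0$. For $\epsilon\neq 0$ the coloring $X_\epsilon=V_{\lambda_1(\epsilon)}\oplus V_{\lambda_2(\epsilon)}$ is semisimple, with $\lambda_1(\epsilon)=1+i-r+\ell r+\epsilon$ and $\lambda_2(\epsilon)=-1-i+r+\ell r+\epsilon$, while $X_0=P_i\otimes\CC_{\ell r}^H$ is indecomposable projective. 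Since each $V_\lambda$ is simple, $T_\lambda$ acts by a scalar, and I write $f(\lambda)=t_{V_\lambda}(T_\lambda)/\textbf{d}(V_\lambda)$, so that $T_\lambda=f(\lambda)\,\mathrm{Id}_{V_\lambda}$.

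For the first identity I would use additivity of the modified trace over direct sums, giving $t_{X_\epsilon}(T_{X_\epsilon})=t_{V_{\lambda_1(\epsilon)}}(T_{\lambda_1(\epsilon)})+t_{V_{\lambda_2(\epsilon)}}(T_{\lambda_2(\epsilon)})$ for $\epsilon\neq 0$. Letting $\epsilon\to 0$ and invoking the commutation of the invariant with this specialization (Theorem \ref{thm:deform}) replaces the left-hand side by $t_{P_i\otimes\CC_{\ell r}^H}(T_{P_i\otimes\CC_{\ell r}^H})$, which is precisely the claimed trace formula.

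For the endomorphism decomposition, $\mathrm{End}(P_i\otimes\CC_{\ell r}^H)$ is two-dimensional, spanned by $\mathrm{Id}$ and the nilpotent $x_{i,\ell r}$, so $T_{P_i\otimes\CC_{\ell r}^H}=a\,\mathrm{Id}+b\,x_{i,\ell r}$. The semisimple part is read off the summands: on $V_{\lambda_j(\epsilon)}$ the invariant is the scalar $f(\lambda_j(\epsilon))$, and $a=\lim_{\epsilon\to 0}f(\lambda_j(\epsilon))$, the same for $j=1,2$, giving the two equivalent expressions for $a$. To isolate $b$ I would carry out a Jordan-block degeneration: writing $e_{\lambda_j}(\epsilon)$ for a generator of $V_{\lambda_j(\epsilon)}$, in the adapted basis the two summands coalesce at $\epsilon=0$ and the natural top vector is $v=\lim_{\epsilon\to 0}\bigl(e_{\lambda_2}(\epsilon)-e_{\lambda_1}(\epsilon)\bigr)/\epsilon$. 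A direct computation then shows that the limiting endomorphism sends $v$ to $a\,v$ plus a multiple of the socle vector $e_{\lambda_1}$ with coefficient $\lim_{\epsilon\to 0}\bigl(f(\lambda_2(\epsilon))-f(\lambda_1(\epsilon))\bigr)/\epsilon=\frac{d}{d\lambda}f\Big|_{\lambda=\ell r+r-i-1}-\frac{d}{d\lambda}f\Big|_{\lambda=1+i-r+\ell r}$. Identifying this raw nilpotent with the normalized $x_{i,\ell r}$ of Section \ref{sec:proj} supplies the prefactor $\frac{r}{2\pi i\,\{1+i\}}$ and hence the stated value of $b$.

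The main obstacle is this degeneration bookkeeping for $b$. Theorem \ref{thm:deform} already guarantees that the limit of $T_{X_\epsilon}$ lands in $\mathrm{End}(P_i\otimes\CC_{\ell r}^H)$, so the two diagonal scalars automatically share the common value $a$ and the limiting endomorphism is a single Jordan block; what requires care is making the degeneration of the adapted basis explicit enough to evaluate the off-diagonal entry as the difference of derivatives above, and then matching the resulting raw nilpotent against the normalized $x_{i,\ell r}$ to pin down the prefactor $\frac{r}{2\pi i\,\{1+i\}}$. This last identification, which hinges on the explicit module maps of the deformable family together with the precise definition of $x_{i,\ell r}$ in Section \ref{sec:proj}, is where the real work lies.
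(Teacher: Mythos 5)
Your proposal is correct and takes essentially the same approach as the paper: both identities are deduced from the deformable family of Theorem \ref{thm:deform}, using additivity of the modified trace together with commutation of limits for the trace identity, and a degeneration of the two scalar actions $t_{V_\lambda}(T_\lambda)/\textbf{d}(V_\lambda)$ (with L'H\^opital producing the derivative difference) for the coefficients $a$ and $b$. The bookkeeping you defer as ``the real work'' is in fact already packaged in Theorem \ref{thm:deform}: since $x_{i,\ell r}$ is by definition the map $\w_i^H\mapsto \w_i^S$, the paper simply inverts the explicit change of basis $\w_i^H=2x_0-\tfrac{1}{2[1+i][\epsilon]}y_{r-1-i}$, $\w_i^S=-2[1+i][\epsilon]x_0+y_{r-1-i}$ and reads off the two coefficients of $T(\w_i^H)$, with the prefactor $\tfrac{r}{2\pi i\{1+i\}}$ arising in the L'H\^opital step because the deformation parameter is $[1+i][\epsilon]$ (so $\{\epsilon\}\sim\tfrac{2\pi i}{r}\epsilon$) rather than $\epsilon$ itself.
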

The construction of the deformable family of modules $X_\epsilon$ is the only key ingredient in proving this theorem. Murakami and Nagatomo \cite{MN} also constructed a deformable family of modules of the semi-restricted quantum group that specialized at a special point to a projective indecomposable but reducible module and that only at this special point also became a module of $\UR$, while in \cite{M1}, Murakami varied the $q$-parameter of $\UU$ such that in the limit $q=e^{\frac{2\pi i}{2r}}$ the module specialized to projective indecomposable but reducible module. 
In both cases it was then used that the employed $R$-matrix of the involved quantum groups is basically the same. We however recall again, that $\UR$ is not braided. 

Presently, the only vertex operator algebras with infinitely many simple objects and non semi-simple representations whose categories of modules are known to be braided is the Heisenberg vertex algebra (see Remark \ref{last}) and the Kazhdan-Lusztig category \cite{KL}.  
The last section present an attempt to push the tensor product theory beyond the Heisenberg vertex algebra. We think that the singlet algebra is an excellent candidate in this 
direction. In order to apply the Huang-Lepowsky-Zhang theory, several highly non-trivial conditions have to be verified including the $C_1$-cofiniteness of a suitable subcategory of modules. Although at this stage we do not have a full proof that 
$\mathcal{M}(r)$-mod is braided, we reduced this problem to a purely representation theoretic condition (assumptions (a) and (b) below):
\begin{theorem} All finite-length $\mathcal{M}(r)$-modules are $C_1$-cofinite. 
Assume that (a) every $C_1$-cofinite, $\mathbb{N}$-graded module is of finite length, and (b) every finitely generated, generalized $\mathbb{N}$-graded $\mathcal{M}(r)$-module is $C_1$-cofinite. Then, the category of $\mathbb N$-graded, $C_1$-cofinite $\mathcal M(r)$-modules can be equipped with a braided tensor  category structure. \end{theorem}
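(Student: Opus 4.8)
I would prove the two assertions separately, beginning with the unconditional $C_1$-cofiniteness of finite-length modules. The base case is that every simple $\M$-module is $C_1$-cofinite. Using the realization of $\M$ inside the rank-one Heisenberg vertex algebra, each simple module is either a typical Fock module $F_\lambda$ or an atypical irreducible, and in either case one computes $\dim M/C_1(M)<\infty$ directly from the strong generation of $\M$ by the Virasoro field together with a single field of weight $2r-1$: modulo $C_1(M)$ every descendant is reduced, via $L_{-1}$ and the singular-vector relations, to a finite set of representatives. For arbitrary finite length I would use that $C_1$-cofiniteness is stable under extensions, so that for $0 \to A \to B \to C \to 0$ one has $\dim B/C_1(B) \le \dim A/C_1(A)+\dim C/C_1(C)<\infty$; inducting along a composition series then gives the claim.

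Now assume (a) and (b) and let $\C$ denote the category of $\mathbb{N}$-graded, $C_1$-cofinite $\M$-modules. I would first verify that $\C$ is abelian and closed under the contragredient. By (a) every object of $\C$ has finite length; since submodules, quotients and contragredients of finite-length modules are again of finite length, they are $C_1$-cofinite by the first part, so $\C$ is closed under kernels, cokernels and duals. Hence $\C$ is an abelian category of grading-restricted generalized modules, stable under duality, all of whose objects are $C_1$-cofinite.

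The heart of the argument is the verification of Assumptions 10.1, 12.1 and 12.2 of \cite{HLZ}. Because every object of $\C$ is $C_1$-cofinite, the convergence and extension properties for products and iterates of intertwining operators hold by the main analytic theorem of \cite{HLZ}, supplying Assumptions 12.1 and 12.2 and, once the tensor product is defined, the associativity isomorphisms. The remaining point, Assumption 10.1, is closure of $\C$ under the $P(z)$-tensor product, and this is where I expect the main obstacle. For $M_1,M_2\in\C$ the generalized module $M_1\boxtimes_{P(z)}M_2$ is built by the graded-dual construction of \cite{HLZ}; invoking Miyamoto's criterion \cite{Mi1} one shows it is finitely generated as a generalized $\mathbb{N}$-graded module, whereupon hypothesis (b) makes it $C_1$-cofinite and hypothesis (a) forces it to have finite length, so that $M_1\boxtimes_{P(z)}M_2\in\C$. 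The genuine difficulty is the control of lowest-weight growth needed to establish this finite generation; this is exactly the step that Miyamoto's closure result does not complete on its own and that the two representation-theoretic hypotheses are designed to bridge.

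With Assumptions 10.1, 12.1 and 12.2 established, the main theorem of \cite{HLZ} endows $\C$ with a vertex tensor category structure and therefore with a braided tensor category structure, the braiding and associativity being those induced by the skew-symmetry and associativity of intertwining operators, whose convergence is guaranteed by the $C_1$-cofiniteness maintained throughout.
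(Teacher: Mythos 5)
Your second half follows essentially the same route as the paper: assumption (a) forces every object of the category to have finite length, which yields closure under contragredients and the nilpotency of $L(0)-L(0)_{ss}$ required by Assumption 10.1(vi); Miyamoto's theorem \cite{Mi1} gives closure under $P(z)$-tensor products; and \cite{HLZ} then applies. One correction there: it is assumption (b) itself, not the mere $C_1$-cofiniteness of the objects of the category, that the paper invokes for Assumptions 12.1 and 12.2 — those assumptions concern finitely generated generalized modules that are not a priori objects of the category, so your attribution of 12.1/12.2 to "every object of $\mathcal{C}$ is $C_1$-cofinite" is off, though harmlessly so since you assume (b) anyway.

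The genuine gap is in the first, unconditional statement. Your induction along a composition series, using stability of $C_1$-cofiniteness under extensions, is fine and is exactly how the paper passes from irreducibles to finite length. But the base case — $C_1$-cofiniteness of the irreducible $\mathcal{M}(r)$-modules — is not a "direct computation," and the mechanism you cite fails. For a typical module $F_\lambda$, $\lambda \notin L'$, the underlying Virasoro module is a generic Verma module: it has no singular vectors at all, and Remark \ref{Vir-Heis} of the paper records that $F_\lambda$ is \emph{not} $C_1$-cofinite as a Virasoro module. So "$L_{-1}$ and the singular-vector relations" reduce nothing; the whole content of the claim is how the weight-$(2r-1)$ generator $H$ cuts the space down, and your sketch never engages with it. For atypicals, $M_{t,s}$ decomposes as an \emph{infinite} direct sum of Virasoro modules, so even granting the singular-vector reduction inside each summand (the paper's Lemma \ref{c1-vir}) one is left with infinitely many representatives; the paper needs the nontrivial claim (along the lines of Lemma 5.3 of \cite{AM1}) that suitable modes $H_j$ carry the Virasoro highest-weight vector of the $n$-th summand onto that of the $(n+1)$-st modulo lower summands, which is what collapses the spanning set to finitely many vectors. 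For typicals the paper's argument is of a different nature altogether: it first obtains $C_1$-cofiniteness for all $\lambda$ outside a countable set by a polynomiality-in-$\lambda$ genericity argument seeded by the atypical Fock spaces, and then reaches every remaining $\lambda$ by writing it as a sum of two generic values and applying Miyamoto's theorem that a surjective intertwining operator out of two $C_1$-cofinite modules (here the Heisenberg intertwiner $F_{\lambda'}\otimes F_{\lambda''} \to F_{\lambda'+\lambda''}$) has $C_1$-cofinite target. None of this can be replaced by inspecting the strong generators of $\mathcal{M}(r)$, so your base case is a missing proof rather than a routine verification.
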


\subsection{Future work}
{This work has several ramifications and extensions. In \cite{CM2}, we introduced and studied regularized characters of modules of certain non $C_2$-cofinite  $W$-algebras denoted by ${W}^0(Q)_r$, where $r \geq 2$ and $Q$ is the root lattice of a simply-laced simple Lie algebra $\frak{g}$ . These vertex algebras are 
"higher rank" generalization of the singlet vertex algebra $\mathcal{M}(r)$. Their categories of modules are expected to be closely related to the category of modules for higher rank unrolled quantum groups $\overline{U}^{\frak{h}}_q(\frak{g})$ at $2r$-th root of unity.
We plan to study asymptotic dimensions of ${W}^0(Q)_r$-modules in connection to quantum invariants of knots and links colored with representations 
of $\overline{U}^{\frak{h}}_q(\frak{g})$. }

{\bf Acknowledgements.} 
T.C. is very grateful to Terry Gannon for collaboration on $C_2$-cofinite VOAs and modular tensor categories resulting in the works \cite{CG,CG2} where the importance of logarithmic Hopf links in $C_2$-cofinite VOAs has been realized. He is also thankful to Azat Gainutdinov and Ingo Runkel for valuable discussions on the relation of $\U$ and $\UR$ \cite{CGR}.  A.M. would like to thank Yi-Zhi Huang for discussions on various aspects of the vertex tensor product theory \cite{HLZ} and to Dra\v{z}en Adamovi\'c. Finally, we are grateful to Jun Murakami for a correspondence.

\section{The singlet vertex operator algebra $\M$}\label{sec:singlet}

Let $r\in \mathbb N_{\geq 2}$.
Here, we review necessary information of the vertex algebra $\M$ following \cite{CM}; see also \cite{AD, AM1, AM2}.
The vertex algebra $\M$ is realized as a subalgebra of the rank one Heisenberg vertex algebra $F_0$.
It is strongly generated by the Virasoro vector $\omega$ (suitably chosen such that the central charge is $1-\frac{6(p-1)^2}{p}$),
together with one primary field of conformal weight $2r-1$, usually denoted by $H$. 
Introduce $$\alpha_+=\sqrt{2r}, \  \alpha_-=-\sqrt{2/r}, \  {\rm and} \   \alpha_0=\alpha_-+\alpha_+.$$
Denote by $F_{\lambda}$ the Fock space with the highest weight $\lambda \in \mathbb{C}$, which is also an $F_0$-module.
All irreducible $\mathcal{M}$-modules are realized as subquotients of $F_\lambda$, which we now describe briefly.

Let $L=\sqrt{2r} \mathbb Z$, viewed as a rank one lattice, and $L'$ denotes its dual lattice. 
Then {\em typical} simple modules $F_\lambda$ are parameterized by $\lambda$ in $\left(\mathbb C \setminus L'\right) \cup L$, while atypical simple modules $M_{t, s} \subset F_{\alpha_{t,s}}$ are parameterized by integers $t, s$ with $1\leq s \leq r-1$. 
Characters of irreducible $\mathcal{M}(r)$-modules can be 
easily computed. Irreducible characters admit an $\epsilon$-regularization, as explained in \cite{CM},
where they are denoted by ${\rm ch}[X^\epsilon]$, where $X$ is an $\mathcal{M}(r)$-module. Here $\epsilon \in \mathbb{C}$.
The main result of \cite{CM} is a formula for the modular transformation of regularized partial and false theta functions, which then gives modular properties of regularized characters. These in turn give a Verlinde-type algebra for the regularized characters, where the product is defined (for ${\rm Re}(\epsilon)>0$\footnote{In \cite{CM}, we used $-\epsilon$ instead of $\epsilon$ so the formula was given in the ${\rm Re}(\epsilon)<0$ region.}) as 
$${\rm ch}[X_a^\epsilon] \times {\rm ch}[X_b^\epsilon]=\int_{\mathbb{R}} \int_{\mathbb{R}} \frac{S_{a \rho}^\epsilon S_{b \rho}^\epsilon \overline{S_{\rho \mu}^{-\epsilon}}}{S_{(1,1) \rho}^\epsilon} {\rm ch}[F_{\mu}^\epsilon] d \mu d \rho,$$ and where $S_{ \cdot, \cdot}$ defines the $S$-kernel.
For irreducible characters this formula reads
\begin{equation}\label{CM-fusion}
\begin{split}
\mathrm{ch}[F_\lambda^\epsilon]\times  \mathrm{ch}[F_\mu^\epsilon] &= \sum_{\ell=0}^{p-1}\mathrm{ch}[F_{\lambda+\mu+\ell\alpha_-}^\epsilon], \qquad\quad
\mathrm{ch}[M_{t ,s}^\epsilon]\times  \mathrm{ch}[F_\mu^\epsilon] = 
\sum_{\substack{\ell=-s+2\\ \ell+s=0\, \mathrm{mod}\, 2}}^{s}\mathrm{ch}[F_{\mu+\alpha_{r,\ell}}^\epsilon]\\
\mathrm{ch}[M_{t ,s}^\epsilon]\times  \mathrm{ch}[M_{t',s'}^\epsilon] &= 
\quad\sum_{\substack{\ell=|s-s'|+1\\ \ell+s+s'=1\, \mathrm{mod}\, 2}}^{min \{ s+s'-1,p \}}\mathrm{ch}[M_{t+t'-1,\ell}^\epsilon] \\
& +\sum_{\substack{\ell=p+1\\ \ell+s+s'=1\, \mathrm{mod}\, 2}}^{s+s'-1}\Bigl(\mathrm{ch}[M_{t+t'-2,\ell-p}^\epsilon]+
\mathrm{ch}[M_{t+t'-1,2p-\ell}^\epsilon]+\mathrm{ch}[M_{t+t',\ell-p}^\epsilon]\Bigr).
\end{split}
\end{equation}

Then regularized asymptotic dimensions are introduced as
\begin{equation}
 \qdim{X^\epsilon} := \lim_{\tau\rightarrow 0+}\frac{\text{ch}[X^\epsilon(\tau)]}{\text{ch}[M_{1,1}^\epsilon](\tau)}.
\end{equation}
Following \cite{CMW} introduce
\begin{equation}\label{eq:regime}
B_\epsilon^r:= - \mathrm{min}\left\{ \Big|\frac{m}{\sqrt{2r}}-\mathrm{Im}\left(\epsilon\right)\Big|\  \Big\vert \ m\in\mathbb Z\setminus r\mathbb Z\ \right\}. 
\end{equation}
Then for ${\rm Re}(\epsilon)>B_\epsilon^r$ the regularized asymptotic dimensions are
\begin{equation}
\begin{split}
\qdim{F_\lambda^\epsilon}&= q_{\epsilon}^{2\lambda-\alpha_0}\frac{\mathrm{sin}(-\pi\alpha_+\epsilon i)}{\mathrm{sin}(\pi\alpha_-\epsilon i)}=
q_{\epsilon}^{2\lambda-\alpha_0}\sum_{\substack{\ell=-p+1\\ \ell+p=1\,\mathrm{mod}\, 2}}^{p-1}  q_{\epsilon}^{\alpha_-\ell}\\
\qdim{M_{t,s}^\epsilon}&= q_{\epsilon}^{-(t-1)\alpha_+}\frac{\mathrm{sin}(\pi s\alpha_-\epsilon i)}{\mathrm{sin}(\pi\alpha_-\epsilon i)}=
q_{\epsilon}^{-(t-1)\alpha_+}\sum_{\substack{\ell=-s+1\\ \ell+s=1\,\mathrm{mod}\, 2}}^{s-1}  q_{\epsilon}^{\alpha_-\ell}
\end{split}
\end{equation}
and for $\re(\epsilon)<B_\epsilon^r$ the answer is $\qdim{F_\lambda^\epsilon}=0$ and
for \(\epsilon\in\strip{k,m}, k\in\mathbb{Z},m=0\dots,2r-1\),
\begin{equation} \label{main-qdim}
\qdim{M_{t ,s}^{\epsilon}}=
\begin{cases}
  (-1)^{m(t-1)} \displaystyle{\frac{\sin(\pi m s/r)}{\sin(\pi m /r)}}&\text{if } m\neq0, r,\\
  (-1)^{(m+1)(t-1)+\frac{m}{r}(s-1)} \displaystyle{\frac{\sin(\pi s/r)}{\sin(\pi
    /r)}}&\text{if } m=0, r.
\end{cases}
\end{equation}
with
\begin{displaymath}
  \strip{k,m}=\left\{\epsilon\in\mathbb{C}\left| k+\frac{2m-1}{4r}<
      \frac{\im(\epsilon)}{\sqrt{2r}} 
      < k+\frac{2m+1}{4r} \right.\right\},
\end{displaymath} 
$k\in\mathbb{Z}$ where $m=0,\dots,2r-1$.
It turns out that the algebra of quantum dimensions and above Verlinde algebra of characters coincide. Further the conjecture of \cite{CM} is that these relations also hold in the Grothendieck ring of the module category of $\M$. Note, that the latter is also only conjectured to be braided.  

\section{The unrolled restricted quantum group $\U$}

Throughout $q=e^{\pi i /r}$, $r \geq 2$. Also, $\{\alpha \}=q^{\alpha}-q^{-\alpha}$, $[\alpha]=\frac{\{ \alpha \}}{\{ 1 \}}$, and $[n]!=[n] \cdots [1]$.

In this section we review basic facts about the unrolled quantum group $\U$ following primarily  \cite{CGP}.
The quantum group $\U$ as a unital associative algebra is generated $E$,$F$, $K$, $K^{-1}$ and $H$, with the following relations:
$$KE=q^{2} EK , \ \ KF=q^{-2} FK,   \ \ HK=KH $$
$$[H,E]=2E, \ \ [H,F]=-2F, \ \ [E,F]=\frac{K-K^{-1}}{q-q^{-1}}.$$
$$ E^r=0, \ \ \ F^r=0.$$
The Hopf algebra structure is defined by using the standard comultiplication formulas for $E$, $F$ and $K$, while $H$ is primitive, that is 
 $\Delta(H)=1 \otimes H+H \otimes 1$. Thus, the antipode map $S$ is induced by letting $S(H)=-H$ and $S(E)$, $S(K)$ and $S(F)$ are defined as usual. 
Not all $\U$  representations are of interest. We say that an $\U$-module $M$ is a {\em weight} module if $M$ is finite-dimensional and $H$-diagonalizable 
such that $q^K=H$ (as an operator on $M$). As in \cite{CGP}, we denote by $\mathcal{C}$ the category of weight $\U$-modules.
Irreducible objects in $\mathcal{C}$ are easy to classify. They are clearly of highest weight and belong to three types: $S_n$, $n=0,..,r-1$, of dimension 
$n+1$,  $V_{\alpha}$, where  $\alpha \in \ddot{\mathbb{C}}:=(\mathbb{C} \setminus \mathbb{Z}) \cup r \mathbb{Z}$ are of dimension $r$, and one-dimensional modules $\mathbb{C}^H_{k r}$. Then a complete list of irreps is given by: (atypicals) $S_i \otimes \mathbb{C}^H_{kr}$, $k \in \mathbb{Z}$,  $n=0,..,r-2$ and (typicals) $V_{\alpha}$, $\alpha \in (\mathbb{C} \setminus \mathbb{Z}) \cup r \mathbb{Z}$. All the irreducible modules can be constructed explicitly in terms of their bases \cite{CGP}.

\subsection{Beyond the category $\mathcal{C}$} Here we discuss an enlargement of the category $\mathcal{C}$. As we shall explain below, it is actually not true that the (full) category of finitely generated  $\mathcal{M}(r)$-modules is equivalent to $\mathcal{C}$.  This is why it is interesting and important to consider $\U$-modules outside the category $\mathcal{C}$.

Next we introduce the cateogry $\mathcal{C}_{log}$ (here $log$ is meant to indicate inclusion of {\em logarithmic} modules - this terminology is motivated by a related notion in logarithmic conformal field theory \cite{HLZ, Mil1}) . Objects in $\mathcal{C}_{log}$ are finite-dimensional $\mathcal{M}(p)$-modules such that $q^{H}=K$ (as operators) but $H$ does not act necessarily semisimple (of course, here $q^H=\sum_{n \geq 0} \frac{( \pi i H)^n}{r^n n!}$). 
Now we show that this category admits self-extensions of generic modules $V_{\alpha}$ but no self-extension of simple modules of dimension 
$< r$ (this is in agreement with the singlet algebra case \cite{AM1,Mil1}). Consider a $2r$-dimensional module $\tilde{V}_{\lambda}$ with a basis $v_{i}^0$, $v_{i}^0$, $i=0,...,r-1$
with the action:

\begin{align*}
& H.v_i^0=(\lambda-2i)v_i^0+v_i^1, \qquad H.v_i^1=(\lambda-2i)v_i^1; \qquad  0 \leq i \leq r-1  \\
& K.v_i^0=q^{\lambda-2i}v_i^0 +\frac{\pi i }{r} q^{\lambda-2i}v_i^1, \qquad Kv_i^1 =q^{\lambda -2i} v_i^1 \\
& E.v_i^0=\frac{\{ i \} \{\lambda+1-i \}}{\{i \}^2} v_{i-1}^{0}+\beta_i v_{i-1}^1, \qquad E.v_i^1=\frac{\{ i \} \{\lambda+1-i \}}{\{i \}^2} v_{i-1}^{1},\qquad
 E.v_0^0=E.v_0^1=0, \\
& F.v_i^0=v_{i+1}^0, \qquad F.v_i^1=v_{i+1}^1; \   0 \leq i \leq r-2, \qquad F.v_{r-1}^0=F.v_{r-1}^1=0,
\end{align*}
where 
$$\beta_0=0,\qquad \beta_i=\frac{\pi i}{r}\frac{1}{q-q^{-1}} \sum_{j \geq 1}^i \left( q^{\lambda-2(j-1)}+ q^{2(j-1)-\lambda}\right).$$ 
{\bf Claim:} $\tilde{V}_\lambda$ is a $\U$-module, a self-extension of ${V}_{\lambda}$.
In order to show that $\tilde{V}_\lambda$ is an $\U$-module the only non-trivial relation 
relation to verify is
$$(EF-FE)v_i^0=\frac{K-K^{-1}}{q-q^{-1}}v_i^0.$$
For this it is essential that 
$$\beta_r=\sum_{j=1}^r q^{2j}=0.$$
It is clear that we now get  a non-split short exact sequece:
$$0 \longrightarrow V_\lambda \longrightarrow \tilde{V}_\lambda \longrightarrow V_{\lambda} \longrightarrow 0.$$
Any irreducible modules of dimension $i<r$, is isomorphic to $S_i \otimes \mathbb{C}^H_{kr}$. In order to rule out  its self extension it is enough 
to follows steps in the proof of the claim and observe that $\sum_{j=0}^i q^{2j} \neq 0.$ 

We conclude with the conjecture that the category of finite-length $\M$-modules is equivalent to the category 
$\mathcal{C}_{log}$. { Moreover, the full subcategory $\mathcal{C} \subset \mathcal{C}_{log}$ is expected to be equivalent to 
the subcategory of $\mathcal{M}(r)$-mod generated by irreducible objects (as a tensor category). We also plan to investigate possible braided category structure on 
$\mathcal{C}_{log}$.}

\subsection{Projective modules}\label{sec:proj}

Projective modules in $\mathcal{C}$ are classified in \cite{CGP}. {Although this paper did not discuss projective covers, it can be easily shown that 
the projective modules denoted by $P_i$ are projective covers of $S_i$, $P_i \otimes \mathbb{C}^H_{kr}$ are projective covers of $S_i \otimes \mathbb{C}^H_{kr}$, 
and $V_\alpha$, $\alpha \in \ddot{\mathbb{C}}$, are their own projective covers.} Their 
Jordan-H\"older filtration is described by Figure 1.

\begin{figure}[h]
\begin{center}
\begin{tikzpicture}[scale=0.70][thick,>=latex,
nom/.style={circle,draw=black!20,fill=black!20,inner sep=1pt}
]
\node(left0) at  (-1,0) [] {$P_i \otimes \Ck: \ \ $}; 
\node (top1) at (5,2.5) [] {$ S_i \otimes \Ck $};
\node (left1) at (2.5,0) [] {$ S_{r-i-2} \otimes \mathbb C^H_{(k+1)r} $};
\node (right1) at (7.5,0) [] {$ S_{r-i-2} \otimes \mathbb C^H_{(k-1)r} $};
\node (bot1) at (5,-2.5) [] {$  S_i \otimes \Ck$};
\draw [->] (top1) -- (left1);
\draw [->] (top1) -- (right1);
\draw [->] (left1) -- (bot1);
\draw [->] (right1) -- (bot1);

\node (top2) at (12, 2) [] {$V_{1+i+(k-1)r}$};
\node (bot2) at (12, -2) [] {$V_{r-i-1+kr}$};
\draw [->] (top2) -- (bot2);

\end{tikzpicture}
\captionbox{\label{fig:Loewy} Loewy diagram of $P_i \otimes \Ck$ in terms of simple (left) and typical (right) composition factors}{\rule{12cm}{0cm}}
\end{center}
\end{figure}

We have ${\rm End}_{\U}(V_\alpha)=\mathbb{C} {\rm Id}_{V_{\alpha}}$ and it can be shown that ${\rm End}_{\U}(P_i)=\mathbb{C} {\rm Id}_{P_i\otimes \Ck} \oplus \mathbb{C}x_{i, k}$ where $x_{i, k}$ is the nilpotent endomorphism of $P_i\otimes \Ck$ uniquely determined by $w^H_i \mapsto w_i^S$ (see Figure 1).

\subsection{Modified quantum dimension}
Let $\Proj$ be the full subtensor category of modules generated by projective $\U$-modules. There exists a unique trace on $\Proj$, up to multiplication by an element of $\mathbb{C}$. In particular, there is a unique trace $t = \{t_V \}$, on $\Proj$, such that for any
$f \in {\rm End}_{\mathcal{C}} (V_0)$ we have $t_{V_0}(f) = (-1)^{r-1} \langle f \rangle$. For such choice of $t$, following \cite{CGP}, we define the modified quantum dimension function as ${\bf d} : {\rm Ob}(\Proj) \rightarrow \mathbb{C}$, ${\bf d}(V): =t_V({\rm Id}_V)$.

\section{Logarithmic invariants via deformable modules}

Open Hopf link invariants have been computed in \cite{CGP}. However for those involving projective modules knowledge of the tensor ring is required. 
Here we find a new way of computation that does not require this knowledge. For this, we introduce a deformable family of modules that then will be used to compute logarithmic tangle invariants.

\begin{theorem}\label{thm:deform}
Let $\epsilon \in (-\frac{1}{2},\frac{1}{2})$, $i \in \{0,...,r-2\}$ and let $\ell \in \Z$. Denote by $X_{\epsilon}$ the module with vector space basis $\{\w^L_{i+2-2r},\w^L_{i+4-2r},...,\w^L_{-i-2},\w^H_{-i},...,\w_{i}^H,\w_{-i}^S,...,\w_{i}^S,\w_{i+2}^R,...,\w^R_{2r-2-i}\}$ and action given by

\begin{align*}
\w_{i+2}^R=(-1)^{\ell}E\w_i^H, \qquad \w_{-i-2}^L=F\w_{-i}^H, \qquad F\w_{i+2}^R=\w_i^S+[1+i][\epsilon]\w_i^H
\end{align*}
\begin{align*}
\w_{i-2k}^H&=F^k\w_i^H & &\text{and} &\quad \w_{i-2k}^S&=F^k\w_i^S &  &\mathrm{for}\ k \in \{0,...,i\}\\
\w_{-i-2-2k}^L&=F^k\w_{-i-2}^L & &\text{and} &\quad \w_{i+2+2k}^R&=(-1)^{k \ell}E^k\w_{i+2}^R&  &\mathrm{for} \ k \in \{0,...,r-2-i\}
\end{align*}
\begin{align*}
H\w_k^X&=(k+\ell r+\epsilon)\w_k^X, & \quad K\w_k^X&=(-1)^{\ell}q^{k+\epsilon}\w_k^X,\qquad &\mathrm{for}\  X \in \{L,H,S,R\}\\
E\w_{k}^R&=(-1)^{\ell}\w_{k+2}^R, & \quad F\w_k^X&=\w_{k-2}^X, \qquad &\mathrm{for}\ X \in \{L,S,H\} \\
F\w_{-i}^S&=[1+i][\epsilon]\w_{-i-2}^L,& E\w_{i}^S&=2(-1)^{\ell+1}[1+i][\epsilon]\w_{i+2}^R,\qquad  & E\w_{2r-2-i}^R=F\w_{i+2-2r}^L=0
\end{align*}
\begin{align*}
E\w_{-i-2}^L&=2(-1)^{\ell}[i+1][\epsilon]\w_{-i}^H+(-1)^{\ell}\w_{-i}^S,\\
E\w_{-i-2-2k}^L&=(-1)^{\ell+1}[1+i+k][k-\epsilon]\w_{-i-2+2(k-1)}^L,\\
F\w_{i+2+2k}^R&=-[1+i+k][k+\epsilon]\w_{i+2+2(k-1)}^R,
\end{align*}
\begin{align*}
E\w_{i-2k}^H&=(2[1+i-k+\epsilon][k]-[1+i-k][k-\epsilon ])(-1)^{\ell}\w_{i-2(k-1)}^H+(-1)^{\ell}\w_{i-2(k-1)}^S,\\
E\w_{i-2k}^S&=(2[1+i-k][k-\epsilon]-[1+i-k+\epsilon][k])(-1)^{\ell}\w_{i-2(k-1)}^S+2(-1)^{\ell+1}[1+i]^2[\epsilon]^2\w_{i-2(k-1)}^H.
\end{align*}
Then
\[
X_{\epsilon}= \left\{ \begin{array}{cc}
V_{1+i-r+\ell r+\epsilon} \oplus V_{-1-i+r+\ell r+\epsilon} & \text{ if }\epsilon \neq 0\\
P_i  \otimes \CC_{\ell r}^H& \text{ if } \epsilon = 0\\
\end{array} \right.\]
\end{theorem}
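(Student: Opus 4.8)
The plan is to prove the statement in three stages: first verify that the displayed formulas really define a $\U$-module $X_\epsilon$ for every $\epsilon$, then analyse the generic case $\epsilon\neq 0$, and finally the degenerate case $\epsilon=0$.

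\emph{Stage 1 (module axioms).} Every basis vector is a simultaneous eigenvector of $H$ and $K$ with $H\w^X_k=(k+\ell r+\epsilon)\w^X_k$ and $K\w^X_k=(-1)^\ell q^{k+\epsilon}\w^X_k$. Since $q^{\ell r}=(-1)^\ell$, the weight condition $q^H=K$ holds automatically, and the relations $KE=q^2EK$, $KF=q^{-2}FK$, $HK=KH$, $[H,E]=2E$, $[H,F]=-2F$ are immediate from the fact that $E,F$ shift weights by $\pm 2$. The substantive relations are $E^r=0=F^r$ and the commutator $[E,F]=\tfrac{K-K^{-1}}{q-q^{-1}}$; I would check these on each of the four families $X\in\{L,H,S,R\}$ of basis vectors, where they collapse to telescoping identities among the quantum integers $[m]$, $[m\pm\epsilon]$, the prototype being $\sum_{j=1}^{r}q^{2j}=0$ (used already in the self-extension Claim of Section \ref{sec:proj}) together with its $\epsilon$-deformed analogues. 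This is the computational heart of the theorem.

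\emph{Stage 2 ($\epsilon\neq0$).} For $\epsilon\in(-\tfrac12,\tfrac12)\setminus\{0\}$ every weight $k+\ell r+\epsilon$ is non-integral, so each composition factor of $X_\epsilon$ is a typical simple $V_\alpha$ with $\alpha\notin\mathbb Z$; as recalled in Section \ref{sec:proj} such modules are their own projective covers, hence projective and injective in $\C$, and therefore $X_\epsilon$ is semisimple. To pin down the two summands I would solve $Ev=0$: the two independent highest-weight vectors are $\w^R_{2r-2-i}$ and an explicit combination of $\w^H_i,\w^S_i$, of weights $2r-2-i+\ell r+\epsilon$ and $i+\ell r+\epsilon$. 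Each generates an $r$-dimensional simple typical; comparing highest weights with the parametrization of Section \ref{sec:proj} (highest $H$-weight $\alpha+r-1$) identifies them as $V_{-1-i+r+\ell r+\epsilon}$, spanning the $\w^R$-strand and one copy of the middle block, and $V_{1+i-r+\ell r+\epsilon}$, spanning the $\w^L$-strand and the other copy. A dimension count $r+r=2r$, together with the observation that their weight supports overlap only in the two-dimensional middle weight spaces where the two strings are linearly independent precisely because $[\epsilon]\neq0$, yields $X_\epsilon=V_{1+i-r+\ell r+\epsilon}\oplus V_{-1-i+r+\ell r+\epsilon}$.

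\emph{Stage 3 ($\epsilon=0$).} At $\epsilon=0$ all coefficients proportional to $[\epsilon]$ vanish, the two highest-weight strings of Stage 2 collapse onto the common vector $\w^S_i$, and $X_0$ becomes indecomposable. I would first check that $\{\w^S_{-i},\dots,\w^S_i\}$ is closed under $E$ and $F$ and isomorphic to $S_i\otimes\CC_{\ell r}^H$, the socle; that the side strands $\{\w^R_\bullet\}$ and $\{\w^L_\bullet\}$, of dimension $r-i-1$, give $S_{r-i-2}\otimes\mathbb C^H_{(\ell-1)r}$ and $S_{r-i-2}\otimes\mathbb C^H_{(\ell+1)r}$; and that the $\w^H$ vectors carry a top copy of $S_i\otimes\CC_{\ell r}^H$, the off-diagonal term $(-1)^\ell\w^S_\bullet$ in $E\w^H_\bullet$ witnessing the non-split extension. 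This reproduces exactly the Loewy diagram of Figure \ref{fig:Loewy}. It then remains to upgrade the matching of socle, top and radical layers to an isomorphism $X_0\cong P_i\otimes\CC_{\ell r}^H$: I would show $X_0$ is projective with top $S_i\otimes\CC_{\ell r}^H$ and invoke uniqueness of projective covers from the classification in \cite{CGP}, after which $x_{i,\ell r}$ is realized by $\w^H_\bullet\mapsto\w^S_\bullet$. The main obstacle is twofold: the quantum-integer identities behind the $[E,F]$ relation and the nilpotency in Stage 1, and in Stage 3 establishing genuine indecomposability and projectivity of the $\epsilon=0$ specialization rather than merely matching composition factors; continuity of the whole family in $\epsilon$ is what links the two regimes and is precisely the feature exploited afterwards for the tangle invariants.
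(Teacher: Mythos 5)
Your overall strategy is sound, but it is genuinely different from the paper's. The paper never verifies the module axioms for the displayed formulas at all: it starts from the known module $V_{1+i-r+\ell r+\epsilon}\oplus V_{-1-i+r+\ell r+\epsilon}$ with standard bases $\{x_k\}$, $\{y_k\}$, defines the vectors $\w^L_\bullet,\w^H_\bullet,\w^S_\bullet,\w^R_\bullet$ as explicit linear combinations (e.g.\ $\w^H_{i-2k}=2x_k-\tfrac{1}{2[1+i][\epsilon]}y_{r-1-i+k}$, $\w^S_{i-2k}=-2[1+i][\epsilon]x_k+y_{r-1-i+k}$), and checks by direct computation that the action on this new basis is exactly the one in the statement; since these formulas are regular at $\epsilon=0$, the specialization is then recognized as the explicit presentation of $P_i\otimes\CC_{\ell r}^H$ given in \cite{CGP}. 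So both the fact that the formulas define a module and the identification of $X_\epsilon$ come for free, and the change-of-basis coefficients $a_\epsilon=2$, $b_\epsilon=-\tfrac{1}{2[1+i][\epsilon]}$, $c_\epsilon=-2[1+i][\epsilon]$, $d_\epsilon=1$ are reused downstream in the proof of Theorem \ref{thm:links}, which your route would not supply. You instead verify the relations directly (your Stage 1 is feasible; note however that the key identity is $[a][b+1]-[a+1][b]=[a-b]$, equivalently $[k][1+i-k+\epsilon]-[1+i-k][k-\epsilon]=[1+i][\epsilon]$, not the relation $\sum_j q^{2j}=0$ from the self-extension claim), and then identify $X_\epsilon$ abstractly. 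Your Stage 2 is correct and more conceptual than the paper's computation: since all weights lie in $\epsilon+\mathbb{Z}$ with $\epsilon\notin\mathbb{Z}$, every composition factor is a typical $V_\alpha$, $\alpha\notin\mathbb{Z}$, and projectivity of all composition factors already forces semisimplicity (injectivity is not needed); the two highest-weight vectors $\w^R_{2r-2-i}$ and $\w^S_i+2[1+i][\epsilon]\w^H_i$ then pin down the two summands.

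The one step you should repair is Stage 3. You propose to ``show $X_0$ is projective'' and then invoke uniqueness of projective covers, but proving projectivity of $X_0$ directly is essentially as hard as the isomorphism you are after. With the ingredients you already have, the efficient argument is: $X_0$ is cyclic, generated by $\w^H_i$, with radical quotient $S_i\otimes\CC_{\ell r}^H$; hence the projective cover $P_i\otimes\CC_{\ell r}^H$ of that simple (Section \ref{sec:proj}) surjects onto $X_0$, and since both have dimension $2r$ this surjection is an isomorphism --- no projectivity of $X_0$ needs to be established beforehand. Alternatively, and this is what the paper does, simply match your $\epsilon=0$ formulas against the explicit basis-and-action description of $P_i\otimes\CC_{\ell r}^H$ in \cite{CGP}, which you must cite in any case; then no structural argument is needed at all.
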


\begin{proof}
Let $\epsilon \in (-\frac{1}{2},\frac{1}{2})\setminus \{0\}$. Let $\{x_0,...,x_{r-1}\}$ denote the standard basis for $ V_{1+i-r+\ell r+\epsilon}$ and $\{y_0,...,y_{r-1}\}$ the standard basis for $V_{-1-i+r+\ell r+\epsilon}$. Define a new basis $\{\w^L_{i+2-2r},\w^L_{i+4-2r},... ,\w^L_{-i-2},$ $\w^H_{-i},...,\w_{i}^H,\w_{-i}^S,...,\w_{i}^S,\w_{i+2}^R,...,\w^R_{2r-2-i}\}$ for $V_{1+i-r+\ell r+\epsilon} \oplus V_{-1-i+r+\ell r+\epsilon}$ by 
\begin{align*}
 \w_{i-2k}^H&=2x_k -\frac{1}{2[1+i][\epsilon]}y_{r-1-i+k}  \qquad \w_{i-2k}^S=-2[1+i][\epsilon]x_k+y_{r-1-i+k}\\
\w_{-i-2-2k}^L&=2x_{1+i+k} \qquad \w_{i+2+2k}^R=\frac{-1}{2[1+i][\epsilon]}\left( \prod\limits_{s=0}^{k} [1+i+s][-s-\epsilon] \right)y_{r-2-i-k}
\end{align*}

We will first prove the statement for $\epsilon \not = 0$. We will show that the action of $\U$ on the new basis is precisely as stated in the theorem. Recall that the action on the standard basis element $v_k \in V_{\alpha}$ is given by 
\[ Hv_k=(\alpha+r-1-2k)v_k, \qquad Ev_{k}=[k][k-\alpha]v_{k-1}, \qquad Fv_k=v_{k+1}. \]
By direct computation, we obtain the following:
\begin{equation*}
\begin{split}
E\w_i^H&=2Ex_0-\frac{1}{2[1+i][\epsilon]}Ey_{r-1-i}=-\frac{[r-1-i][- \ell r-\epsilon]}{2[1+i][\epsilon]}y_{r-2-i}\\
&=\frac{1}{2}(-1)^{\ell}y_{r-2-i}=(-1)^{\ell}\w_{i+2}^R,\\
F\w_{-i}^H&=2Fx_i-\frac{1}{2[1+i][\epsilon]}Fy_{r-1}=2x_{i+1}=\w_{-i-2}^L,\\
\w_i^S+[1+i][\epsilon]\w_i^H&=-2[1+i][\epsilon]x_0+y_{r-1-i}+2[1+i][\epsilon]x_0 -\frac{1}{2}y_{r-1-i}=\frac{1}{2}y_{r-1-i}=\frac{1}{2}Fy_{r-2-1}\\
&=F\w_{i+2}^R.
\end{split}
\end{equation*}
Hence, we have shown $\w_{i+2}^R=(-1)^{\ell}E\w_i^H$, $\w_{-i-2}^L=F\w_{-i}^H$, and $F\w_{i+2}^R=\w_i^S + [1+i][\epsilon]\w_i^H$. It is easily seen that $F\w_k^X=\w_{k-2}^X$ for all $X \in \{L,S,H\}$ and that
\begin{align*}
E\w_{i+2+2k}^R&=\frac{-1}{2[1+i][\epsilon]}\left( \prod\limits_{s=0}^{k} [1+i+s][-s-\epsilon] \right)Ey_{r-2-i-k}\\
&=\frac{-[r-2-i-k][-1-k-\ell r-\epsilon]}{2[1+i][\epsilon]}\left( \prod\limits_{s=0}^{k} [1+i+s][-s-\epsilon] \right)y_{r-2-i-(k+1)}\\
&=\frac{-(-1)^{\ell}}{2[1+i][\epsilon]}\left( \prod\limits_{s=0}^{k+1} [1+i+s][-s-\epsilon] \right)y_{r-2-i-(k+1)}=(-1)^{\ell}\w_{i+2+2(k+1)}^R
\end{align*}
so $E\w_k^R=(-1)^{\ell}\w_{k+2}^R$, which gives $\w_{i+2+2k}^R=(-1)^{k\ell}E^k\w_{i+2}^R$, $\w_{i-2k}^H=F^k\w_i^H$, $\w_{i-2k}^S=F^k\w_i^S$, and $\w_{-i-2-2k}^L=F^k\w_{-i-2}^L$.\\
$H$ acts on a standard basis vector $v_k \in V_{\alpha}$ by $Hv_k=(\alpha+r-1-2k)v_k$, so we have
\begin{align*}
Hx_k&=(1+i-r+\ell r+\epsilon +r -1-2k)x_k=(i-2k+\ell r+\epsilon)x_k\\
Hy_{r-1-i+k}&=(-1-i+r+\ell r+\epsilon +r-1-2(r-1-i+k))y_{r-1-i+k}\\ &=(i-2k+\ell r+\epsilon)y_{r-1-i+k}\\
Hx_{1+i+k}&=(1+i-r+\ell r+\epsilon +r-1-2(1+i+k))x_{1+i+k}\\&=(-i-2-2k+\ell r+\epsilon)x_{1+i+k}\\
Hy_{r-1-i-(k+1)}&=(-1-i+r+\ell r+\epsilon +r-1-2(r-1-i-(k+1))y_{r-1-i-(k+1)}\\
&=(i+2+2k+\ell r+\epsilon)y_{r-1-i-(k+1)}\\
\end{align*}
From this, it immediately follows that
\begin{align*}
H\w_{i-2k}^H&=(i-2k+\ell r+\epsilon)\w_{i-2k}^H,\qquad\qquad\qquad\qquad
H\w_{i-2k}^S=(i-2k+\ell r+\epsilon)\w_{i-2k}^S,\\
H\w_{-i-2-2k}^L&=(-i-2-2k + \ell r+\epsilon)\w_{-i-2-2k}^L,\qquad\quad
H \w_{i+2+2k}^R=(i+2+2k+\ell r+\epsilon) \w_{i+2+2k}^R
\end{align*}
and $K$ acts as $q^H$, so we have shown that $H \w_k^X=(k+\ell r+\epsilon)\w_k^X$ and $K \w_k^X=q^{k+\ell r+\epsilon}\w_k^X=(-1)^{\ell}q^{k+\epsilon}\w_k^X$ for all $X \in \{L,H,S,R\}$. It is easy to see that $E\w_{2r-2-i}^R=F\w_{i+2-2r}^L=0$ as $Fx_{r-1}=Ey_0=0$. We also have
\begin{align*}
F\w_{-i}^S&=-2[1+i][\epsilon]Fx_i+Fy_{r-1}=-2[1+i][\epsilon]x_{i+1}=-[1+i][\epsilon]\w_{-i-2}^L,\\
E\w_i^S&=-2[1+i][\epsilon]Ex_0+Ey_{r-1-i}=[1+i][-\ell r-\epsilon]y_{r-2-i}=2(-1)^{\ell+1}[1+i][\epsilon]\w_{i+2}^R,\\
E\w_{-i-2-2k}^L&=2Ex_{1+i+k}=2[1+i+k][r+k-\ell r-\epsilon]x_{i+k}\\&=(-1)^{\ell+1}[1+i+k][k-\epsilon]\w_{-i-2-2(k-1)}^L,\\
F\w_{i+2+2k}^R&=\frac{-1}{2[1+i][\epsilon]}\left( \prod\limits_{s=0}^{k} [1+i+s][-s-\epsilon] \right)y_{r-1-i-k}\\
&=\frac{-[1+i+k][-k-\epsilon]}{2[1+i][\epsilon]}\left( \prod\limits_{s=0}^{k-1} [1+i+s][-s-\epsilon] \right)y_{r-2-i-(k-1)}\\
&=-[1+i+k][k+\epsilon]\w_{i+2+2(k-1)}^R
\end{align*}
and
\begin{align*}
(-1)^{\ell}\left( \w_{-i}^S+2[1+i][\epsilon]\w_{-i}^H \right) &=(-1)^{\ell} \left(-2[1+i][\epsilon]x_i+y_{r-1}+4[1+i][\epsilon]x_i-y_{r-1}\right)\\
&=2(-1)^{\ell}[1+i][\epsilon]x_i=2[1+i][\ell r+\epsilon]x_i=E\w_{-i-2}^L.
\end{align*}
From the definition of $\w_{i-2k}^H$ and $\w_{i-2k}^S$ it is easy to show that
\begin{align*}
x_k&=\w_{i-2k}^H+\frac{1}{2[1+i][\epsilon]}\w_{i-2k}^S,\qquad\qquad
y_{r-1-i+k}&=2\w_{i-2k}^S+2[1+i][\epsilon]\w_{i-2k}^H.
\end{align*}
From this, we see that
\begin{align*}
E\w_{i-2k}^H&=2Ex_k-\frac{1}{2[1+i][\epsilon]}Ey_{r-1-i+k}\\&=2[k][1+i-k+\ell r+\epsilon]x_{k-1}-\frac{[1+i-k][k-\ell r-\epsilon]}{2[1+i][\epsilon]}y_{r-2-i+k}\\
&=2[k][1+i-k+\epsilon](-1)^{\ell}\left(\w_{i-2(k-1)}^H+\frac{1}{2[1+i][\epsilon]}\w_{i-2(k-1)}^S \right)\\
&\qquad -\frac{[1+i-k][k-\epsilon]}{2[1+i][\epsilon]}(-1)^{\ell}\left(2\w_{i-2(k-1)}^S+2[1+i][\epsilon]\w_{i-2(k-1)}^H\right)\\
&=\left( 2[k][1+i-k+\epsilon]-[1+i-k][k-\epsilon] \right)(-1)^{\ell}\w_{i-2(k-1)}^H\\
&\qquad+\left(\frac{[k][1+i-k+\epsilon]-[1+i-k][k-\epsilon]}{[1+i][\epsilon]} \right)(-1)^{\ell} \w_{i-2(k-1)}^S
\end{align*}
and
\begin{align*}
E\w_{i-2k}^S&=-2[1+i][\epsilon]Ex_k+Ey_{r-1-i+k}\\
&=-2[1+i][\epsilon][k][1+i-k+\ell r+\epsilon]x_{k-1}+[1+i-k][k-\ell r-\epsilon]y_{r-2-i+k}\\
&=-2[1+i][\epsilon][k][1+i-k+\epsilon](-1)^{\ell}\left(\w_{i-2(k-1)}^H+\frac{1}{2[1+i][\epsilon]}\w_{i-2(k-1)}^S \right)\\
&\qquad+[1+i-k][k-\epsilon] (-1)^{\ell} \left( 2\w_{i-2(k-1)}^S+2[1+i][\epsilon]\w_{i-2(k-1)}^H\right)\\
&=\left( 2[1+i-k][k-\epsilon]-[1+i-k+\epsilon][k] \right)(-1)^{\ell} \w_{i-2(k-1)}^S\\
&\qquad+2[1+i][\epsilon] \left( [1+i-k][k-\epsilon]-[1+i-k+\epsilon][k] \right) (-1)^{\ell} \w_{i-2(k-1)}^H.
\end{align*}
However, by expanding the brackets one has
$[k][1+i-k+\epsilon]-[1+i-k][k-\epsilon]=[1+i][\epsilon]$.
Hence, the above equations give
\begin{align*}
\w_{i-2k}^H&=\left( 2[k][1+i-k+\epsilon]-[1+i-k][k-\epsilon] \right)(-1)^{\ell}\w_{i-2(k-1)}^H + (-1)^{\ell}\w_{i-2(k-1)}^S,\\
\w_{i-2k}^S&=\left( 2[1+i-k][k-\epsilon]-[1+i-k+\epsilon][k] \right) (-1)^{\ell}\w_{i-2(k-1)}^S+2(-1)^{\ell+1}[1+i]^2[\epsilon]^2 \w_{i-2(k-1)}^H 
\end{align*}
as desired. This proves that $X_{\epsilon}=V_{1+i-r+\ell r+\epsilon} \oplus V_{-1-i+r+\ell r+\epsilon}$ when $\epsilon \not = 0$. As $\epsilon \to 0$, it is easy to see that the action on $X_0$ is exactly the action on $P_i \otimes \mathbb{C}_{\ell r}^H$ (see \cite{CGP}) by identifying $\w_k^X \in X_0$ with $\w_k^X \otimes v \in P_i \otimes \mathbb{C}_{kr}^H$ (here $\mathbb{C}_{kr}^H=\mathbb C v$). Hence, we have shown
\[
X_{\epsilon}= \left\{ \begin{array}{cc}
V_{1+i-r+\ell r+\epsilon} \oplus V_{-1-i+r+\ell r+\epsilon} & \text{ if }\epsilon \neq 0\\
P_i  \otimes \mathbb{C}_{\ell r}^H& \text{ if } \epsilon = 0\\
\end{array} \right. .
\]
\end{proof}

\begin{corollary}
$\lim\limits_{\epsilon \to a} AX_{\epsilon}=A \lim\limits_{\epsilon \to a} X_{\epsilon} \quad \forall A \in \overline{U}^H_q(\mathfrak{sl}_2)$
\end{corollary}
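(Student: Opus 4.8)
The plan is to read the statement as the assertion that the whole family $X_\epsilon$ is built on a \emph{single}, $\epsilon$-independent underlying vector space $V$, namely the span of the formal symbols $\{\w^L_{i+2-2r},\dots,\w^R_{2r-2-i}\}$ fixed in Theorem~\ref{thm:deform}, and that the representation map $\rho_\epsilon\colon\U\to\mathrm{End}(V)$ depends continuously on $\epsilon$. Concretely, for a fixed vector $v\in V$ the symbol ``$A X_\epsilon$'' denotes $\rho_\epsilon(A)\,v$, while ``$\lim_{\epsilon\to a}X_\epsilon$'' denotes the genuine $\U$-module $X_a$ supplied by the theorem; since $v$ itself carries no $\epsilon$-dependence, the corollary reduces to
\[
\lim_{\epsilon\to a}\rho_\epsilon(A)=\rho_a(A)\qquad\text{in }\mathrm{End}(V),
\]
that is, to continuity of each matrix entry of $\rho_\epsilon(A)$ at $\epsilon=a$.

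First I would inspect the action formulas in Theorem~\ref{thm:deform}. Every structure constant occurring there is a polynomial in quantum integers of the form $[m\pm\epsilon]$, together with the scalars $q^{k+\epsilon}$ and the signs $(-1)^\ell$. Because $q=e^{\pi i/r}$, we have $q^{\alpha}=e^{\pi i\alpha/r}$, so $[m\pm\epsilon]=\tfrac{q^{m\pm\epsilon}-q^{-(m\pm\epsilon)}}{q-q^{-1}}$ is an \emph{entire} function of $\epsilon$, and the prefactors $q^{k+\epsilon}$ are as well; sums and products preserve this. The key observation is that on the $\w$-basis \emph{no} action formula carries $[\epsilon]$ in a denominator. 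Hence the matrices $\rho_\epsilon(E)$, $\rho_\epsilon(F)$, $\rho_\epsilon(K^{\pm1})$ and $\rho_\epsilon(H)$ all depend entirely—in particular continuously—on $\epsilon$. Since $\U$ is generated as an algebra by these elements, any $A$ is a finite $\CC$-linear combination of words in the generators, and $\rho_\epsilon(A)$ is the corresponding polynomial in the generator matrices; as matrix addition and multiplication are continuous, $\epsilon\mapsto\rho_\epsilon(A)$ is continuous for every $A$. Evaluating at $\epsilon=a$ and invoking Theorem~\ref{thm:deform} to identify the resulting operators with the honest $\U$-action on $X_a$ then yields the displayed identity.

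The one point that must be handled with care—and the reason the corollary is not vacuous—is the \emph{choice of basis}. Had one used the natural basis $\{x_k,y_k\}$ of the direct sum $V_{1+i-r+\ell r+\epsilon}\oplus V_{-1-i+r+\ell r+\epsilon}$, the change-of-basis relations from the proof of Theorem~\ref{thm:deform} contain the factor $\tfrac{1}{2[1+i][\epsilon]}$, which blows up as $\epsilon\to0$; equivalently the direct-sum decomposition degenerates and $X_0\cong P_i\otimes\Ck$ becomes indecomposable. Thus continuity of the action is genuinely a statement about the $\w$-basis, in which the singular part of the change of basis has been absorbed. The main (though modest) obstacle is therefore simply to confirm by inspection that \emph{all} the listed structure constants—including the off-diagonal couplings such as the $[1+i]^2[\epsilon]^2$ term in $E\w^S_{i-2k}$ and the $[1+i+k][k\pm\epsilon]$ factors on the $L$- and $R$-strings—are polynomial in $\epsilon$ and hence regular at $\epsilon=a$. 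Once this is checked the limit passes through $\rho_\epsilon$ entry by entry, and the corollary follows.
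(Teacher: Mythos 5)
Your proposal is correct and follows essentially the same route as the paper: the paper's proof simply notes that the statement holds for the generators $E$, $F$, $H$, $K$ by construction of $X_\epsilon$ (i.e., the structure constants in the $\w$-basis are continuous in $\epsilon$) and hence for all polynomials in them. Your additional observations—that the structure constants are entire in $\epsilon$ and that the continuity is a statement specifically about the $\w$-basis, since the change of basis to $\{x_k,y_k\}$ is singular at $\epsilon=0$—make explicit what the paper leaves implicit, but do not change the argument.
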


\begin{proof}
The Theorem holds for $E, F, H$ and $K$ by construction of $X_{\epsilon}$ and hence holds for all polynomials in $E,F,H$ and $K$. 
\end{proof}

\begin{corollary}
Morphisms that consist of compositions of braidings, twists, evaluations and co-evaluations commute with limits.
\end{corollary}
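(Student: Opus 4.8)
The plan is to reduce the statement to the previous Corollary, which already establishes that the action of every generator---and hence every element---of $\U$ commutes with the limit $\epsilon\to a$. The key observation is that each of the structural morphisms named (braidings, twists, evaluations, co-evaluations) is, on the deformable family $X_\epsilon$, built from operators that are themselves polynomial (or at worst controlled analytic) expressions in the generators $E,F,H,K$ together with data---the $R$-matrix, the ribbon element, the (co)evaluation pairings---that vary \emph{continuously} in $\epsilon$. First I would recall the explicit forms: the braiding $c_{M,N}=\tau\circ(\text{\v{R}}$-action$)$ is given by the universal $R$-matrix, whose $H\otimes H$ factor $q^{H\otimes H/2}$ acts on the weight basis $\w_k^X$ of $X_\epsilon$ by the scalar $q^{(k+\ell r+\epsilon)(\cdots)/2}$, manifestly continuous in $\epsilon$, while the nilpotent factor $\sum_n \frac{\{1\}^{2n}}{[n]!}q^{n(n-1)/2}E^n\otimes F^n$ is a \emph{finite} sum (terminating because $E^r=F^r=0$) of products of generators. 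Since each term commutes with the limit by the previous Corollary and the scalar prefactors converge, the whole braiding does.

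Next I would treat the twist $\theta$, which acts on $X_\epsilon$ through the ribbon element; on a highest-weight vector of weight $\lambda$ it acts by the scalar $q^{\lambda^2+\cdots}$, again continuous in $\epsilon$, and on the rest of the module by the corresponding $H$-dependent and nilpotent pieces, all finite polynomial expressions in the generators. The (co)evaluation morphisms are determined by the pivotal structure and the action of $K^{r-1}$ (or the relevant group-like element implementing the square of the antipode); these are likewise scalar-plus-polynomial in the generators and vary continuously with $\epsilon$. In each case the matrix entries of the morphism, expressed in the basis $\{\w_k^X\}$ that is \emph{defined uniformly for all} $\epsilon$ in Theorem~\ref{thm:deform}, are continuous functions of $\epsilon$, so that $\lim_{\epsilon\to a}$ of the morphism applied to a basis vector equals the morphism at $a$ applied to the limiting basis vector.

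The final step is closure under composition and tensor product. If $f_\epsilon$ and $g_\epsilon$ each commute with the limit and have entries continuous in $\epsilon$, then so does $g_\epsilon\circ f_\epsilon$: writing everything in the uniform basis, the entries of the composite are finite sums of products of continuous functions, hence continuous, and $\lim_{\epsilon\to a}(g_\epsilon\circ f_\epsilon)=(\lim g_\epsilon)\circ(\lim f_\epsilon)$ by continuity of multiplication and the inductive hypothesis. The same argument applies to tensor products, using that the coproduct is an algebra map and that the previous Corollary extends to $\U^{\otimes n}$ acting on $X_\epsilon^{\otimes n}$. Since every morphism in the statement is a finite composite of tensor products of the basic pieces handled above, the Corollary follows.

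I expect the main obstacle to be purely bookkeeping rather than conceptual: one must check that the generators' matrix coefficients on $X_\epsilon$ really are \emph{jointly} continuous in $\epsilon$ at $a=0$, including at the point where the module type changes from $V_{1+i-r+\ell r+\epsilon}\oplus V_{-1-i+r+\ell r+\epsilon}$ to the indecomposable $P_i\otimes\CC_{\ell r}^H$. The delicate coefficients are those involving $[1+i][\epsilon]$ in the change-of-basis formulas of Theorem~\ref{thm:deform}, where individual basis vectors blow up as $\epsilon\to 0$; however, the morphisms themselves are expressed directly in the $\{\w_k^X\}$ basis (not the $\{x_k,y_k\}$ basis), and in that basis all structure constants appearing in Theorem~\ref{thm:deform}, together with those of the $R$-matrix and ribbon element, are polynomials in $[\epsilon]$ and $q^\epsilon$, hence extend continuously across $\epsilon=0$. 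Verifying this uniform continuity in the $\w$-basis is the one point requiring genuine care.
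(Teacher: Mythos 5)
Your proposal is correct and takes essentially the same route as the paper's own proof: the paper likewise reduces the statement to the previous corollary by observing that braiding, twist, evaluation and co-evaluation are all expressed through the action of elements of $\U$, or of its $H$-completion in the case of the braiding. Your explicit treatment of the Cartan factor of the $R$-matrix and of the continuity of matrix entries in the $\textsf{w}$-basis merely spells out what the paper's one-line argument leaves implicit.
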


\begin{proof}
This follows from the previous Theorem as braiding, twist, evaluation and co-evaluation are expressed in terms of the action of elements of $\U$ or 
in the $H$-completion  of $\U$ (for the braiding). 
\end{proof}

\begin{proposition}
The modified quantum dimension satisfies
$\lim\limits_{\epsilon \rightarrow 0} \textbf{d}(X_{\epsilon})=\textbf{d}(X_0)$.
\end{proposition}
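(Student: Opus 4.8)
The plan is to reduce the left-hand side to a sum of modified dimensions of typical modules, evaluate that sum with the explicit modified-dimension formula of \cite{CGP}, and then match the result against the modified dimension of the degenerate projective $X_0=P_i\otimes\CC_{\ell r}^H$. Throughout I write $\alpha_1=1+i-r+\ell r$ and $\alpha_2=-1-i+r+\ell r$, so that by Theorem \ref{thm:deform} we have $X_\epsilon=V_{\alpha_1+\epsilon}\oplus V_{\alpha_2+\epsilon}$ for $\epsilon\neq0$. For such $\epsilon\in(-\tfrac12,\tfrac12)$ each color satisfies $\alpha_j+\epsilon\notin\Z$, so $V_{\alpha_j+\epsilon}$ is typical simple, hence its own projective cover and an object of $\Proj$ on which $\textbf{d}$ is defined.

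First I would use that the modified trace on $\Proj$ is additive over direct sums: since $\mathrm{Id}_{X_\epsilon}=\mathrm{Id}_{V_{\alpha_1+\epsilon}}\oplus\mathrm{Id}_{V_{\alpha_2+\epsilon}}$, this gives
\[
\textbf{d}(X_\epsilon)=\textbf{d}(V_{\alpha_1+\epsilon})+\textbf{d}(V_{\alpha_2+\epsilon}),\qquad \epsilon\neq0.
\]
Next I would invoke the closed formula from \cite{CGP},
\[
\textbf{d}(V_\alpha)=(-1)^{r-1}\frac{\{r\alpha\}}{r\{\alpha\}}=(-1)^{r-1}\frac{\sin(\pi\alpha)}{r\sin(\pi\alpha/r)},
\]
which is consistent with the normalization $\textbf{d}(V_0)=(-1)^{r-1}$ fixed in Section \ref{sec:proj} (the indeterminate limit at $\alpha=0$ evaluates to $1$). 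This is a continuous function of $\alpha$ wherever $\sin(\pi\alpha/r)\neq0$, i.e.\ away from $r\Z$. A short check shows $\alpha_1,\alpha_2\in\Z\setminus r\Z$: for $i\in\{0,\dots,r-2\}$ the residues $1+i$ and $r-1-i$ lie in $\{1,\dots,r-1\}$, so neither color is in $r\Z$. Hence both summands are regular at $\epsilon=0$ with nonvanishing denominators, and
\[
\lim_{\epsilon\to0}\textbf{d}(X_\epsilon)=\textbf{d}(V_{\alpha_1})+\textbf{d}(V_{\alpha_2})=0,
\]
because $\sin(\pi\alpha_j)=0$ for the integer $\alpha_j$ while $\sin(\pi\alpha_j/r)\neq0$, so each term vanishes.

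Finally I would establish the matching value $\textbf{d}(X_0)=\textbf{d}(P_i\otimes\CC_{\ell r}^H)=0$. This is precisely the feature that makes the nilpotent endomorphism $x_{i,\ell r}$, rather than $\mathrm{Id}$, the carrier of the modified trace on this projective: $P_i\otimes\CC_{\ell r}^H$ is the projective cover of the \emph{non-projective} simple $S_i\otimes\CC_{\ell r}^H$, and for such covers the modified dimension vanishes, in contrast to the simple projectives $V_{\ell r}$ for which it does not. I would take this from the computation of $t$ on projectives in \cite{CGP}; alternatively it follows by evaluating $t_{P_i\otimes\CC_{\ell r}^H}(\mathrm{Id})$ through a partial trace that realizes $P_i\otimes\CC_{\ell r}^H$ as a summand of a tensor product of typical modules, where the ambidexterity of $t$ forces the identity contribution to zero. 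Comparing the two displays then yields $\lim_{\epsilon\to0}\textbf{d}(X_\epsilon)=0=\textbf{d}(X_0)$.

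The main obstacle is exactly this last step: additivity over direct sums and the continuity of the meromorphic function $\textbf{d}(V_\alpha)$ are routine, and all the typical-side behavior is controlled by the explicit formula. The genuine content is that the degenerate projective has vanishing modified dimension, which is where the non-semisimplicity of $\mathcal C_{log}$ enters and which guarantees that the limit of the generically split family $X_\epsilon$ agrees with the indecomposable limit $X_0$.
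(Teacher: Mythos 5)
There is a genuine error, and it sits at the heart of your argument: you have inverted the modified-dimension formula. The paper (following \cite{CGP}, and stated right after \eqref{eq:hopflinks}) has
\[
\textbf{d}(V_\alpha)=(-1)^{r-1}\,r\,\frac{\{\alpha\}}{\{r\alpha\}}=(-1)^{r-1}\,r\,\frac{\sin(\pi\alpha/r)}{\sin(\pi\alpha)},
\]
not $(-1)^{r-1}\{r\alpha\}/(r\{\alpha\})$ as you wrote. The two formulas agree at $\alpha=0$ (both limits are $1$), which is why your normalization check against $\textbf{d}(V_0)=(-1)^{r-1}$ did not catch the inversion; but ambidexterity of the trace forces $\textbf{d}(V_\alpha)\Phi_{V_\beta,V_\alpha}$ to be symmetric in $\alpha,\beta$, and with $\Phi_{V_\beta,V_\alpha}$ proportional to $\{r\alpha\}/\{\alpha\}$ this pins down $\textbf{d}(V_\alpha)\propto\{\alpha\}/\{r\alpha\}$. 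With the correct formula your key regularity claim fails: at $\alpha_1=1+i-r+\ell r$ and $\alpha_2=-1-i+r+\ell r$, which lie in $\Z\setminus r\Z$, the denominator $\sin(\pi\alpha)$ vanishes while the numerator does not, so each summand $\textbf{d}(V_{\alpha_j+\epsilon})$ has a simple pole as $\epsilon\to0$, not a zero. The finite limit exists only because the two poles cancel against each other, and exhibiting this cancellation is the actual content of the proposition. The paper does this by putting the two terms over a common denominator and applying L'H\^opital's rule twice, obtaining
\[
\lim_{\epsilon\to0}\textbf{d}(X_\epsilon)=(-1)^{\ell(r-1)+i+1}\bigl(q^{i+1}+q^{-i-1}\bigr)=\textbf{d}\bigl(P_i\otimes\CC_{\ell r}^H\bigr).
\]

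Your final step is also false: $\textbf{d}(P_i\otimes\CC_{\ell r}^H)$ is not zero (generically), as the displayed value shows. You have conflated the \emph{ordinary} quantum trace, which does vanish on endomorphisms of projective objects (this is exactly why projectives are negligible and why a modified trace is introduced at all), with the \emph{modified} trace, whose raison d'\^etre is to be nondegenerate on the projective ideal. So your proof, while internally consistent, establishes the chain $0=0$ in place of the true nonzero equality; both sides are wrong by the same inversion, which masked the error. To repair it you would keep your first step (additivity of $\textbf{d}$ over the direct sum, which matches the paper), replace the formula by the correct one, and then carry out the pole-cancellation limit — at which point you recover the paper's computation, and the identification of the limit with $\textbf{d}(X_0)$ requires quoting the known value of $\textbf{d}(P_i\otimes\CC_{\ell r}^H)$ from \cite{CGP} rather than a vanishing claim.
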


\begin{proof}
Set $\lambda= 1+i-r$, then we have
\begin{align*}
 \textbf{d}(X_{\epsilon})& = \textbf{d}(V_{\lambda+\ell r+\epsilon} \oplus V_{-\lambda+\ell r+\epsilon})= \textbf{d}(V_{\lambda+\ell r+\epsilon})+\textbf{d}(V_{-\lambda + \ell r+\epsilon})) \\
 & =(-1)^{r-1}r \left( \frac{\{\lambda+\ell r+\epsilon\}}{\{r(\lambda+\ell r + \epsilon)\}}+ \frac{\{-\lambda+\ell r+ \epsilon\}}{\{r(-\lambda+\ell r+\epsilon)\}} \right)\\
  & =(-1)^{\ell(r-1)+r-1}r \left( \frac{\{\lambda+\epsilon\}}{\{r(\lambda+ \epsilon)\}}+ \frac{\{-\lambda+ \epsilon\}}{\{r(-\lambda+\epsilon)\}} \right)\\
 &\hspace{-1.3cm}=(-1)^{(\ell+1)(r-1)}r \frac{(q^{\lambda+\epsilon}-q^{-(\lambda+\epsilon)})(q^{r(-\lambda+\epsilon)}-q^{-r(-\lambda+\epsilon)})+(q^{-\lambda+\epsilon}-q^{-(-\lambda+\epsilon)})(q^{r(\lambda+\epsilon)}-q^{-r(\lambda+\epsilon)})}{(q^{r(\lambda+\epsilon)}-q^{-r(\lambda+\epsilon)})(q^{r(-\lambda+\epsilon)}-q^{-r(-\lambda+\epsilon)})}. 
\end{align*}
We have to evaluate this expression for $\epsilon \rightarrow 0$. Both the denominator and nominator vanish in this limit. It turns out that the same happens for the derivatives of both denominator and nominator and so we have to apply the rule of L'H\^opital twice and we get as nominator $8r(-1)^{1+i-r}(q^{1+i-r}+q^{-(1+i-r)})$, and a denominator of $8r^2$. Hence,
\begin{align*}
 \lim\limits_{\epsilon \rightarrow 0} \textbf{d}(X_{\epsilon})&=(-1)^{\ell(r-1)+r-1}r \left( \frac{8r(-1)^{1+i-r}(q^{1+i-r}+q^{-(1+i-r)})}{8r^2} \right) \\
 &=(-1)^{\ell(r-1)+i+1}(q^{i+1}+q^{-i-1})=\textbf{d}(P_i \otimes \mathbb{C}_{\ell r}^H).
\end{align*}
\end{proof}

\subsection{Colored Alexander invariants}

We now apply the above construction. For this we need to recall Section 3 of \cite{GPT}. Let $L$ be a $\mathcal C$-colored ribbon graph, such that at least one of the colors is a simple $V_\lambda$. 
Let $T_\lambda$ be the colored $(1, 1)$-ribbon graph by cutting the edge belonging to $V_\lambda$. Then the re-normalized Reshetikhin-Turaev link invariant is
\[
F'(L) = t_{V_{\lambda}}(T_\lambda).
\]
These where shown in \cite{GPT} to coincide with Murakami's Alexander invariants \cite{M1} provided all colors are simple projective modules. We can now extend these results to any projective module. For this let $L$ be as above but with at least one of the colors the module $P_i\otimes  \CC_{\ell r}^H$. Let $T_{P_i\otimes  \CC_{\ell r}^H}$ be the colored $(1, 1)$-ribbon graph by cutting the edge belonging to $P_i\otimes  \CC_{\ell r}^H$. Let $T_\lambda$ be the colored $(1, 1)$-ribbon graph obtained by replacing the open strand (which is colored with $P_i\otimes  \CC_{\ell r}^H$) with $V_\lambda$. 

\begin{theorem}\label{thm:links}
The colored $(1, 1)$-ribbon graph $T_{P_i\otimes  \CC_{\ell r}^H}$ satisfies
\[
 t_{P_i\otimes  \CC_{\ell r}^H}\left(T_{P_i\otimes  \CC_{\ell r}^H}\right) = \lim\limits_{\epsilon \to 0}\left( t_{V_{1+i-r+\ell r+\epsilon}}\left(T_{1+i-r+\ell r+\epsilon}\right) +   t_{V_{-1-i-r+\ell r+\epsilon}}\left(T_{-1-i-r+\ell r+\epsilon}\right) \right)
\]
and 
\[
T_{P_i\otimes  \CC_{\ell r}^H} =  a Id_{P_i \otimes  \CC_{\ell r}^H} +bx_{i, \ell r} 
\]
with coefficients
\[
a =  \lim\limits_{\epsilon \to 0} \frac{t_{V_{-1-i-r+\ell r+\epsilon}}\left(T_{-1-i-r+\ell r+\epsilon}\right)}{\textbf{d}\left(V_{-1-i+r+\ell r+\epsilon}\right)} =  \lim\limits_{\epsilon \to 0}  \frac{t_{V_{1+i-r+\ell r+\epsilon}}\left(T_{1+i-r+\ell r+\epsilon}\right)}{\textbf{d}\left(V_{1+i-r+\ell r+\epsilon} \right)}
\]
and 
\begin{equation}\nonumber
\begin{split}
b &= \lim_{\epsilon \to 0}\frac{-1}{[1+i][\epsilon]}\left(  \frac{t_{V_{-1-i-r+\ell r+\epsilon}}\left(T_{-1-i-r+\ell r+\epsilon}\right)}{\textbf{d}\left(V_{-1-i+r+\ell r+\epsilon}\right)} -   \frac{t_{V_{1+i-r+\ell r+\epsilon}}\left(T_{1+i-r+\ell r+\epsilon}\right)}{\textbf{d}\left(V_{1+i-r+\ell r+\epsilon} \right)} \right)\\ 
&= \frac{r}{2\pi i \{1+i\}} \left(\frac{d}{d\lambda} \frac{t_{V_{\lambda}}\left(T_{\lambda}\right)}{\textbf{d}\left(V_{\lambda}\right)} \Big|_{\lambda=\ell r+r-i-1} 
 -\frac{d}{d\lambda} \frac{t_{V_{\lambda}}\left(T_{\lambda}\right)}{\textbf{d}\left(V_{\lambda} \right)}\Big|_{\lambda=1+i-r+\ell r} \right). 
\end{split}
\end{equation}
\end{theorem}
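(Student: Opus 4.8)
The whole statement is engineered to follow from Theorem \ref{thm:deform} and its corollaries, which guarantee that both the $\U$-action and every morphism assembled from braidings, twists, evaluations and coevaluations commute with the specialization $\epsilon\to0$. The plan is therefore to compute the tangle operator on the \emph{generic} member $X_\epsilon=V_{\lambda_+}\oplus V_{\lambda_-}$, where $\lambda_+=1+i-r+\ell r+\epsilon$ and $\lambda_-=-1-i+r+\ell r+\epsilon$, and then pass to the limit. For $\epsilon\neq0$ the two summands are non-isomorphic simple typical modules, so $\mathrm{Hom}_{\U}(V_{\lambda_+},V_{\lambda_-})=0$ and the invariant is block diagonal, $T_{X_\epsilon}=T_{\lambda_+}\oplus T_{\lambda_-}$, with each $T_{\lambda_\pm}=c_\pm(\epsilon)\,\mathrm{Id}_{V_{\lambda_\pm}}$ a scalar because $V_{\lambda_\pm}$ is simple; here $c_\pm(\epsilon)=t_{V_{\lambda_\pm}}(T_{\lambda_\pm})/\textbf{d}(V_{\lambda_\pm})$.

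First I would prove the trace identity. The modified trace is built from the evaluation/coevaluation morphisms together with the modified dimension, both of which commute with the specialization by the corollaries and the proposition following Theorem \ref{thm:deform}; hence the scalar $t_{X_\epsilon}(T_{X_\epsilon})$ specializes continuously and $\lim_{\epsilon\to0}t_{X_\epsilon}(T_{X_\epsilon})=t_{X_0}(T_{X_0})$. On the other hand, additivity of the modified trace over the direct sum gives, for $\epsilon\neq0$,
\[
t_{X_\epsilon}(T_{X_\epsilon})=t_{V_{\lambda_+}}(T_{\lambda_+})+t_{V_{\lambda_-}}(T_{\lambda_-}),
\]
and combining the two displays yields the first formula.

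Next I would establish the decomposition. Since $T_{X_0}=\lim_{\epsilon\to0}T_{X_\epsilon}$ is a limit of $\U$-module endomorphisms and the action commutes with the limit, $T_{X_0}\in\mathrm{End}_{\U}(P_i\otimes\CC_{\ell r}^H)=\CC\,\mathrm{Id}\oplus\CC\,x_{i,\ell r}$ by Section \ref{sec:proj}, so automatically $T_{X_0}=a\,\mathrm{Id}+b\,x_{i,\ell r}$. To pin down $a$ and $b$ I would feed the change of basis of Theorem \ref{thm:deform}, namely $x_k=\w_{i-2k}^H+\tfrac{1}{2[1+i][\epsilon]}\w_{i-2k}^S$ and $y_{r-1-i+k}=2\w_{i-2k}^S+2[1+i][\epsilon]\w_{i-2k}^H$, into the block-diagonal operator, obtaining
\[
T_{X_\epsilon}\,\w_{i-2k}^H=\bigl(2c_+(\epsilon)-c_-(\epsilon)\bigr)\w_{i-2k}^H+\frac{c_+(\epsilon)-c_-(\epsilon)}{[1+i][\epsilon]}\,\w_{i-2k}^S .
\]
Because $x_{i,\ell r}$ sends $\w^H\mapsto\w^S$ and annihilates $\w^S$, reading off coefficients identifies $a=\lim_{\epsilon\to0}\bigl(2c_+(\epsilon)-c_-(\epsilon)\bigr)$ and $b=\lim_{\epsilon\to0}\tfrac{c_+(\epsilon)-c_-(\epsilon)}{[1+i][\epsilon]}$.

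The one genuine subtlety, and the step carrying the real content, is controlling the $1/[\epsilon]$ singularity. The limit $T_{X_0}$ exists, so the $\w^S$-coefficient above must stay finite; since $[1+i][\epsilon]\to0$ this forces $c_+(0)=c_-(0)$, which is precisely the assertion that the two displayed formulas for $a$ agree (they are $c_+(0)$ and $c_-(0)$), and then $a=2c_+(0)-c_-(0)=c_+(0)=c_-(0)$. For $b$ both numerator and denominator vanish at $\epsilon=0$, so I would apply L'H\^opital once: writing $g(\lambda)=t_{V_\lambda}(T_\lambda)/\textbf{d}(V_\lambda)$ and $\lambda_\pm(0)=\ell r\mp(r-i-1)$, the numerator differentiates to $g'(\lambda_+(0))-g'(\lambda_-(0))$, the difference of the derivatives of $g$ at the two typical composition factors of $P_i\otimes\CC_{\ell r}^H$, while the denominator linearizes through $\tfrac{d}{d\epsilon}[\epsilon]\big|_{0}=\tfrac{2\pi i}{r\{1\}}$, producing the stated derivative expression for $b$. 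The points that make these limits legitimate are the continuity of the modified trace along the family (from the corollaries and proposition to Theorem \ref{thm:deform}) and the analyticity of $g$ in $\lambda$, so that the one-sided limits are genuine derivatives; the hard part is organizing these analytic facts so that the singular change of basis cleanly extracts the nilpotent coefficient.
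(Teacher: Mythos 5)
Your proposal is correct and follows essentially the same route as the paper's own proof: both exploit the deformable family $X_\epsilon$ of Theorem \ref{thm:deform} and its corollaries (limits commute with the $\U$-action, the tangle morphisms, and the modified trace), prove the trace identity by additivity over the direct sum $V_{\lambda_+}\oplus V_{\lambda_-}$, and extract $a$ and $b$ by pushing the scalar block-diagonal operator through the singular change of basis and applying L'H\^opital to the $\w^S$-coefficient. The only (welcome) difference is presentational: you spell out that finiteness of the $\w^S$-coefficient forces $c_+(0)=c_-(0)$, reconciling the two formulas for $a$, a step the paper leaves implicit in ``evaluating the limits \dots give the result.''
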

Before proving this theorem, we remark that this result nicely relates to the work of Murakami and Nagatomo on logarithmic link invariants obtained using different quantum groups but the same $R$-matrix \cite{M1, M2, MN}.

We also remark that
\[
T_\lambda = \frac{t_{V_{\lambda}}\left(T_{\lambda}\right)}{\textbf{d}\left(V_{\lambda}\right)} \, Id_{V_\lambda}.
\]
\begin{proof}

For the first statement, we use the identity $\lim\limits_{\epsilon \rightarrow 0} V_{1+i-r+\ell r+\epsilon} \oplus V_{-1-i+r+ \ell r +\epsilon}=P_i \otimes \CC_{\ell r}^H$ which gives
\begin{align*}
t_{P_i\otimes  \CC_{\ell r}^H}\left(T_{P_i\otimes  \CC_{\ell r}^H}\right) = \lim\limits_{\epsilon \to 0} T_{V_{1+i-r+\ell r+\epsilon} \oplus V_{-1-i+r+ \ell r +\epsilon}}\left(T_{V_{1+i-r+\ell r+\epsilon} \oplus V_{-1-i+r+ \ell r +\epsilon}} \right)
\end{align*}
where we pulled the limit out of the function using the fact that limits commute with the partial trace and the action of $\U$. The relation 
follows since coloring with the direct sum of two objects $X\oplus Y$ amounts to computing the sum of the individually colored components
\begin{equation*}
\begin{split}
t_{X \oplus Y}&\left(T_{X \oplus Y} \right)=  t_{X}\left(T_{X} \right) + t_{Y}\left(T_{Y} \right). 
\end{split}
\end{equation*} 

For the second statement, since $\textsf{w}_i^H$ generates $P_i\otimes \CC_{\ell r}^H$, it is enough to find the action of $T_{P_i\otimes  \CC_{\ell r}^H}$ on $\textsf{w}_i^H$. 
For this we compute the action of $T_{V_{1+i-r+\ell r+\epsilon} \oplus V_{-1-i+r+ \ell r +\epsilon}}$ on $\textsf{w}_i^H$ and then take the limit $\epsilon$ to zero. 
Recall that we have $\textsf{w}_i^H=a_{\epsilon}x_0+b_{\epsilon}y_{r-1-i}$ and $\textsf{w}_i^S=c_{\epsilon}x_0+d_{\epsilon}y_{r-1-i}$
where $x_0$, $y_{r-1-i}$, $a_{\epsilon}= 2$, $b_{\epsilon}=-\frac{1}{2[1+i][\epsilon]}$,  $c_{\epsilon}=-2[1+i][\epsilon]$, and $d_{\epsilon}=1$ are as in the construction of $X_{\epsilon}$. Notice that $x_0=d_{\epsilon}\w_i^H-b_{\epsilon}\w_i^S$ and $y_{r-1-i}=a_{\epsilon}\w_i^S-c_{\epsilon}\w_i^H$. We can now compute the action of $T_{P_i\otimes  \CC_{\ell r}^H}$ on $\textsf{w}_i^H$: 
\begin{equation*}
\begin{split}
T_{P_i\otimes  \CC_{\ell r}^H}(\w_i^H ) &= T_{P_i\otimes  \CC_{\ell r}^H}   \left(\lim\limits_{\epsilon \to 0} a_{\epsilon}x_0+b_{\epsilon}y_{r-1-i}\right)
= \lim\limits_{\epsilon \to 0} \left( a_\epsilon T_{1+i-r + \ell r+\epsilon}(x_0) +  b_\epsilon T_{-1-i+r + \ell r+\epsilon}(y_{r-1-i})\right)  \\
&=  \lim\limits_{\epsilon \to 0} \left( a_\epsilon\frac{t_{V_{1+i-r+\ell r+\epsilon}}\left(T_{1+i-r+\ell r+\epsilon}\right)}{\textbf{d}\left(V_{1+i-r+\ell r+\epsilon} \right)}(d_{\epsilon}\w_i^H-b_{\epsilon}\w_i^S) + \right.\\
&\qquad\qquad \left. b_\epsilon  \frac{t_{V_{-1-i-r+\ell r+\epsilon}}\left(T_{-1-i-r+\ell r+\epsilon}\right)}{\textbf{d}\left(V_{-1-i+r+\ell r+\epsilon}\right)}(a_{\epsilon}\w_i^S-c_{\epsilon}\w_i^H)\right).
\end{split}
\end{equation*} 
It follows that 
\begin{equation*}
\begin{split}
a &=  \lim\limits_{\epsilon \to 0} \left( a_\epsilon d_\epsilon \frac{t_{V_{1+i-r+\ell r+\epsilon}}\left(T_{1+i-r+\ell r+\epsilon}\right)}{\textbf{d}\left(V_{1+i-r+\ell r+\epsilon} \right)}- b_{\epsilon}c_{\epsilon}\frac{t_{V_{-1-i-r+\ell r+\epsilon}}\left(T_{-1-i-r+\ell r+\epsilon}\right)}{\textbf{d}\left(V_{-1-i+r+\ell r+\epsilon}\right)} \right)\\
&=\lim\limits_{\epsilon \to 0} \left( 2 \frac{t_{V_{1+i-r+\ell r+\epsilon}}\left(T_{1+i-r+\ell r+\epsilon}\right)}{\textbf{d}\left(V_{1+i-r+\ell r+\epsilon} \right)} - \frac{t_{V_{-1-i-r+\ell r+\epsilon}}\left(T_{-1-i-r+\ell r+\epsilon}\right)}{\textbf{d}\left(V_{-1-i+r+\ell r+\epsilon}\right)} \right)
\end{split}
\end{equation*} 
and 
\begin{equation*}
\begin{split}
b &= - \lim\limits_{\epsilon \to 0} \left(a_\epsilon b_\epsilon \frac{t_{V_{1+i-r+\ell r+\epsilon}}\left(T_{1+i-r+\ell r+\epsilon}\right)}{\textbf{d}\left(V_{1+i-r+\ell r+\epsilon} \right)} - a_{\epsilon}b_{\epsilon}\frac{t_{V_{-1-i-r+\ell r+\epsilon}}\left(T_{-1-i-r+\ell r+\epsilon}\right)}{\textbf{d}\left(V_{-1-i+r+\ell r+\epsilon}\right)} \right)\\
&=  \lim\limits_{\epsilon \to 0} \frac{1}{[1+i][\epsilon]} \left( \frac{t_{V_{1+i-r+\ell r+\epsilon}}\left(T_{1+i-r+\ell r+\epsilon}\right)}{\textbf{d}\left(V_{1+i-r+\ell r+\epsilon} \right)} - \frac{t_{V_{-1-i-r+\ell r+\epsilon}}\left(T_{-1-i-r+\ell r+\epsilon}\right)}{\textbf{d}\left(V_{-1-i+r+\ell r+\epsilon}\right)} \right).
\end{split}
\end{equation*} 
Evaluating the limits (using L'H\^opital's rule for $b$) give the result.  
\end{proof}

Recall the definition of general Hopf link invariants. Given two modules $V,W$ in $\mathcal{C}$, define 
$$\Phi_{V,W}=({\rm Id}_W \otimes ev'_V) \circ (c_{V,W} \otimes {\rm Id}_{V^*}) \circ (c_{W,V} \otimes {\rm Id}_{V^*}) \circ ({\rm Id}_W \otimes coev_V) \in {\rm End}(W).$$
where $ev'_V$ and $coev_V$ are the right evaluation and left coevaluation, respectively.
These invariants have been computed in \cite{CGP}. If $W$ is simple then this computation is relatively straight forward as one only needs to know the action on a highest-weight state. One gets for example
\begin{equation}\label{eq:hopflinks}
\begin{split}
\Phi_{V_\alpha, V_\beta} &= \frac{\{r\beta\}}{\{\beta\}} q^{\alpha\beta} {\rm Id}_{V_\beta}, \qquad\qquad \Phi_{S_i\otimes \Ck, V_\beta} =  \frac{\{(i+1)\beta\}}{\{\beta\}} q^{kr\beta} {\rm Id}_{V_\beta},\\
\Phi_{P_i\otimes \Ck, V_\beta} &=  \frac{\{ r\beta\}}{\{\beta\}} q^{kr\beta} \left( q^{(r-1-i)\beta}+q^{-(r-1-i)\beta}\right) {\rm Id}_{V_\beta}.
\end{split}
\end{equation}
Also note that $\textbf{d}\left(V_\beta\right) = \frac{\{ \beta \} }{\{ r \beta \}} (-1)^{r-1}r$.
If $W$ is not simple the computation is more involved and \cite{CGP} needs tensor product decomposition. Using above theorem and \eqref{eq:hopflinks} we immediately get
\begin{corollary}\label{cor:logHopf}
The colored Hopf links $\Phi_{Z,P_j}$ with $Z \in \{S_i \otimes \Ck, V_{\alpha}, P_i \otimes \Ck  \}$ are given by $$\Phi_{Z,P_j\otimes \mathbb C^H_{\ell r}}=a_Z{\rm Id}_{P_j\otimes \mathbb C^H_{\ell r}} + b_Z x_{j, \ell},$$ where $a_{P_i\otimes \Ck}=a_{V_{\alpha}}=0$ and 
\begin{equation*}
\begin{split}
a_{S_i\otimes \Ck}&=(-1)^{i+\ell i +k\ell r} \frac{\{(i+1)(j+1)\}}{\{j+1\}},\\
b_{S_i\otimes \Ck} &=(-1)^{i+\ell i +k\ell r}\frac{i\{(i+2)(j+1)\}-(i+2)\{i(j+1)\}}{[j+1]^2\{j+1\}},\\
b_{V_{\alpha}}&=q^{\alpha\ell r}\frac{(-1)^{r-j}r}{[j+1]^2}(q^{(r-1-j)\alpha}+q^{-(r-1-j)\alpha}),\\
b_{P_i\otimes \Ck}&=\frac{2r(-1)^{i+\ell i +k\ell r}}{[j+1]^2}(q^{(i+1)(j+1)}+q^{-(i+1)(j+1)}).
\end{split}
\end{equation*}
\end{corollary}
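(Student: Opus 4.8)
The plan is to obtain this directly from Theorem \ref{thm:links} by taking the colored $(1,1)$-ribbon graph $T$ there to be the open Hopf link whose closed component is colored by $Z$ and whose cut-open component carries $P_j\otimes\mathbb C^H_{\ell r}$. Then $T_{P_j\otimes\mathbb C^H_{\ell r}}=\Phi_{Z,P_j\otimes\mathbb C^H_{\ell r}}$ is precisely the endomorphism to be computed, while recoloring the open strand by a typical module gives $T_\lambda=\Phi_{Z,V_\lambda}$. Since $\mathrm{End}_{\U}(V_\lambda)=\mathbb C\,\mathrm{Id}_{V_\lambda}$, the remark just before the proof of Theorem \ref{thm:links} identifies the scalar $f_Z(\lambda):=t_{V_\lambda}(T_\lambda)/\textbf{d}(V_\lambda)$ with the eigenvalue of $\Phi_{Z,V_\lambda}$, which is read off from \eqref{eq:hopflinks}. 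Theorem \ref{thm:links} then yields $\Phi_{Z,P_j\otimes\mathbb C^H_{\ell r}}=a_Z\,\mathrm{Id}+b_Z\,x_{j,\ell}$ with
\[
a_Z=f_Z(\lambda_1),\qquad b_Z=\lim_{\epsilon\to0}\frac{-1}{[j+1][\epsilon]}\bigl(f_Z(\lambda_2+\epsilon)-f_Z(\lambda_1+\epsilon)\bigr),
\]
where $\lambda_1=1+j-r+\ell r$ and $\lambda_2=r-1-j+\ell r$ are the highest weights of the two typical composition factors of $P_j\otimes\mathbb C^H_{\ell r}$ (Figure \ref{fig:Loewy}). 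It then remains only to evaluate $f_Z$ and its derivative at these two integer points for each of the three colors $Z$.

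For the $a_Z$ the key point is that $\lambda_1,\lambda_2\in\mathbb Z$, so that $\{r\lambda\}=q^{r\lambda}-q^{-r\lambda}=e^{\pi i\lambda}-e^{-\pi i\lambda}$ vanishes there, while $\{\lambda_1\},\{\lambda_2\}\neq0$ because $j+1\notin r\mathbb Z$. As $f_{V_\alpha}$ and $f_{P_i\otimes\Ck}$ in \eqref{eq:hopflinks} both carry the factor $\{r\lambda\}/\{\lambda\}$, this forces $a_{V_\alpha}=a_{P_i\otimes\Ck}=0$ at once. For $Z=S_i\otimes\Ck$ the relevant factor is instead $\{(i+1)\lambda\}$, which is generically nonzero; reducing $\{(i+1)\lambda_1\}$ and $q^{kr\lambda_1}$ using the periodicity $q^{rm}=(-1)^m$ collapses the answer to $(-1)^{\bullet}\{(i+1)(j+1)\}/\{j+1\}$, giving $a_{S_i\otimes\Ck}$.

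For the $b_Z$ I would resolve the $0/0$ limit above by L'H\^opital; since $[\epsilon]\sim 2\pi i\epsilon/(r\{1\})$ as $\epsilon\to0$, this converts $b_Z$ into the difference $f_Z'(\lambda_1)-f_Z'(\lambda_2)$ with prefactor $r\{1\}/(2\pi i[j+1])$, and, combined with the $1/\{\lambda\}$ factor already present in $f_Z$, the identity $[j+1]=\{j+1\}/\{1\}$ is exactly what produces the $[j+1]^2$ in the denominators of the stated formulas. For $Z\in\{V_\alpha,\,P_i\otimes\Ck\}$ the differentiation is painless: because $\{r\lambda\}$ vanishes at $\lambda_1,\lambda_2$, only the term in which $\{r\lambda\}$ itself is differentiated survives, and $\tfrac{d}{d\lambda}\{r\lambda\}\big|_{\lambda\in\mathbb Z}=2\pi i(-1)^\lambda$ cancels the $2\pi i$. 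Using $\lambda_1+\lambda_2=2\ell r$ to factor out $q^{\alpha\ell r}$ and symmetrize the remaining exponentials into $q^{(r-1-j)\alpha}+q^{-(r-1-j)\alpha}$ then delivers $b_{V_\alpha}$, and the analogous reduction delivers $b_{P_i\otimes\Ck}$.

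The genuinely laborious case is $b_{S_i\otimes\Ck}$, where $f_{S_i\otimes\Ck}=\{(i+1)\lambda\}\,q^{kr\lambda}/\{\lambda\}$ has no vanishing factor, so its derivative must be computed by the full quotient rule and evaluated at both $\lambda_1$ and $\lambda_2$; combining the two contributions and simplifying by finite-difference identities for $\{n\lambda\}$ is what collapses everything into the compact numerator $i\{(i+2)(j+1)\}-(i+2)\{i(j+1)\}$. I expect this step, together with the uniform tracking of the signs $(-1)^{\,i+\ell i+k\ell r}$ that arise from reducing $q^{kr\lambda}$ and $\{(i+1)\lambda\}$ at integer $\lambda$, to be the main obstacle; I would therefore fix the orientation and trace conventions once, via the normalization $t_{V_0}$ of Section \ref{sec:proj}, and carry them consistently through all three colors.
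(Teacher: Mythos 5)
Your proposal is correct and is essentially the paper's own proof: the paper derives the corollary in one line by applying Theorem \ref{thm:links} to the open Hopf link (so that $T_\lambda=\Phi_{Z,V_\lambda}$ is the scalar read off from \eqref{eq:hopflinks}) and then evaluating the resulting value/limit formulas for $a_Z$ and $b_Z$, exactly as you outline. Your fleshed-out details — vanishing of $\{r\lambda\}$ at the integer weights $\lambda_1,\lambda_2$ forcing $a_{V_\alpha}=a_{P_i\otimes\Ck}=0$, the L'H\^opital step with $[\epsilon]\sim 2\pi i\epsilon/(r\{1\})$ producing the $[j+1]^2$ denominators, and the quotient-rule computation for $S_i\otimes\Ck$ — are precisely the computations the paper leaves implicit behind ``we immediately get.''
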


\section{Open Hopf links and asymptotic dimensions}\label{sec:comparison}

We will see that continuous quantum/asymptotic dimensions of the singlet algebra correspond to semi-simple parts of logarithmic Hopf links of $\U$ and the discrete ones correspond to the nilpotent ones; thus giving a categorical interpretation of asymptotic dimensions as suggested by \cite{CM, CMW} (see also \cite{BFM} for another derivation of asymptotic dimensions).

\begin{proposition}\label{prop:comp}
The quantum dimensions of the typical and atypical modules for the singlet vertex algebra are in agreement with the modified traces of their corresponding $\overline{U}_q^H(sl_2)$ modules in the following sense:
Let $\beta \in \ddot{\mathbb{C}}$ and $i, j \in \{0, \dots, r-2\}$ and $k, k' \in \mathbb Z$ then for $Re(\epsilon)>B_\epsilon^r$ and $\alpha=\sqrt{2r}i\epsilon$
\[
\text{\rm qdim}[F^{\frac{-i \alpha}{\sqrt{2r}}}_{\frac{\beta + r -1}{\sqrt{2r}}}]=\frac{t_{V_{\alpha}}(\Phi_{V_{\beta},V_{\alpha}})}{t_{V_{\alpha}}(\Phi_{S_0,V_{\alpha}})}, \qquad \text{\rm qdim}[M_{1-k,j+1}^{\frac{-i \alpha}{\sqrt{2r}}}]=\frac{t_{V_{\alpha}}(\Phi_{S_j \otimes \Ck, V_{\alpha}})}{t_{V_{\alpha}}(\Phi_{S_0 \otimes V_{\alpha}})},
\]
and for $Re(\epsilon)<B_\epsilon^r$ and $\epsilon \in \mathbb{S}(k,j+1+r(k+1))$ then
\[
  \text{\rm qdim}[F^{\epsilon}_{\frac{\beta + r -1}{\sqrt{2r}}}]=\frac{t_{P_j \otimes \Ck}(\Phi_{V_\beta, P_j \otimes \Ck} \circ x_{j,k})}{t_{P_j \otimes \Ck}(\Phi_{S_0,P_j \otimes \Ck} \circ x_{j,k})},
\qquad 
\text{\rm qdim}[M^{\epsilon}_{1-k',i+1}]=\frac{t_{P_j \otimes \Ck}(\Phi_{S_i\otimes \mathbb{C}_{k'r}^H,P_j \otimes \Ck} \circ x_{j,k})}{t_{P_j \otimes \Ck}(\Phi_{S_0,P_j \otimes \Ck} \circ x_{j,k})}.
\]
\end{proposition}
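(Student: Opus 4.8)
The plan is to compute both sides in closed form and match them, working under the dictionary $\alpha=\sqrt{2r}\,i\epsilon$, $\lambda=\frac{\beta+r-1}{\sqrt{2r}}$ together with the module-label translation $S_i\otimes\mathbb{C}_{k'r}^H\leftrightarrow M_{1-k',i+1}$ from the first theorem. The initial reduction, valid in every case, is that $\Phi_{S_0,V_\alpha}={\rm Id}_{V_\alpha}$ and $\Phi_{S_0,P_j\otimes\Ck}={\rm Id}$: this is the $i=0$, $k=0$ specialization of the second line of \eqref{eq:hopflinks}, so in particular $a_{S_0}=1$. Consequently every denominator on the right-hand sides is just a modified dimension, or a modified trace of the bare nilpotent, and the only real content lies in the numerators.

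For the continuous regime $\re(\epsilon)>B_\epsilon^r$ I would substitute \eqref{eq:hopflinks} directly. Since $\Phi_{V_\beta,V_\alpha}$ and $\Phi_{S_j\otimes\Ck,V_\alpha}$ are scalar operators, the factor $\textbf{d}(V_\alpha)=t_{V_\alpha}({\rm Id}_{V_\alpha})$ cancels and the ratios become $\frac{\{r\alpha\}}{\{\alpha\}}q^{\alpha\beta}$ and $\frac{\{(j+1)\alpha\}}{\{\alpha\}}q^{kr\alpha}$. Using $q=e^{\pi i/r}$ and $\{x\}=2i\sin(\pi x/r)$ these are $\frac{\sin(\pi\alpha)}{\sin(\pi\alpha/r)}q^{\alpha\beta}$ and $\frac{\sin(\pi(j+1)\alpha/r)}{\sin(\pi\alpha/r)}q^{kr\alpha}$. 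On the vertex-algebra side I would expand the $\re(\epsilon)>B_\epsilon^r$ formulas for $\qdim{F_\lambda^\epsilon}$ and $\qdim{M_{t,s}^\epsilon}$ with $\epsilon=\frac{-i\alpha}{\sqrt{2r}}$, $t=1-k$, $s=j+1$; a short computation gives $2\lambda-\alpha_0=\beta\sqrt{2/r}$ and $\alpha_+\epsilon i=\alpha$, $\alpha_-\epsilon i=-\alpha/r$, so that, with the regularization base $q_\epsilon=e^{-\pi\epsilon}$, the prefactors $q_\epsilon^{2\lambda-\alpha_0}$ and $q_\epsilon^{-(t-1)\alpha_+}$ become $q^{\alpha\beta}$ and $q^{kr\alpha}$, while the sine quotients coincide with those above term by term. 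This settles both continuous identities.

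For the discrete regime $\re(\epsilon)<B_\epsilon^r$ I would first exploit nilpotency. Since $x_{j,k}^2=0$, Corollary \ref{cor:logHopf} gives $\Phi_{Z,P_j\otimes\Ck}\circ x_{j,k}=(a_Z\,{\rm Id}+b_Z\,x_{j,k})\circ x_{j,k}=a_Z\,x_{j,k}$ for each admissible color $Z$, so every ratio collapses to $a_Z/a_{S_0}=a_Z$, the common factor $t_{P_j\otimes\Ck}(x_{j,k})$ cancelling; this cancellation requires $t_{P_j\otimes\Ck}(x_{j,k})\neq0$, which I would record separately. Reading off Corollary \ref{cor:logHopf}, $a_{V_\beta}=0$, and its vanishing reproduces $\qdim{F_\lambda^\epsilon}=0$ in this regime, so the typical identity is immediate. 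The atypical identity then reduces to comparing $a_{S_i\otimes\mathbb{C}_{k'r}^H}=(-1)^{i+ki+k'kr}\frac{\{(i+1)(j+1)\}}{\{j+1\}}$ with formula \eqref{main-qdim} for $\qdim{M_{1-k',i+1}^\epsilon}$ on the strip $\strip{k,j+1+r(k+1)}$.

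I expect this last comparison, i.e. the strip-index bookkeeping, to be the main obstacle. The label $m=j+1+r(k+1)$ generally falls outside $\{0,\dots,2r-1\}$, so I would first reduce it via the tiling relation $\strip{k,m+2r}=\strip{k+1,m}$; because $j+1\in\{1,\dots,r-1\}$ the reduced index never equals $0$ or $r$, so only the first branch of \eqref{main-qdim} occurs. The sine quotient $\frac{\sin(\pi m s/r)}{\sin(\pi m/r)}$ is periodic mod $2r$, and angle addition with $s=i+1$ an integer collapses it to $(-1)^{(k+1)i}\frac{\sin(\pi(i+1)(j+1)/r)}{\sin(\pi(j+1)/r)}$, whose sine part is exactly $\frac{\{(i+1)(j+1)\}}{\{j+1\}}$. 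What remains delicate is reconciling the accumulated signs, namely the factor $(-1)^{m(t-1)}$ from \eqref{main-qdim} together with the $(-1)^{(k+1)i}$ above, against $(-1)^{i+ki+k'kr}$ from Corollary \ref{cor:logHopf}. The specific strip $\strip{k,j+1+r(k+1)}$ in the hypothesis is chosen precisely to control these parities, and carefully tracking the sign absorbed into $k$ under the reduction is the crux of the argument; once this is done the atypical expressions agree and the proof is complete.
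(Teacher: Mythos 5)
Your strategy is structurally the same as the paper's: reduce every ratio to the scalar coefficients of the open Hopf links and compare with the explicit quantum dimension formulas. Most of your reductions are sound and in places more careful than the paper's own write-up: $\Phi_{S_0,W}=\mathrm{Id}_W$, the continuous-regime computation (with $2\lambda-\alpha_0=\beta\sqrt{2/r}$, $\alpha_+\epsilon i=\alpha$, $\alpha_-\epsilon i=-\alpha/r$), the collapse $\Phi_{Z,P_j\otimes\Ck}\circ x_{j,k}=a_Z\,x_{j,k}$ via $x_{j,k}^2=0$, the observation that the typical-discrete identity is just $0=0$, and your flagging of the hypothesis $t_{P_j\otimes\Ck}(x_{j,k})\neq 0$, which the paper never mentions.

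The genuine gap is exactly the step you defer: ``once this [sign tracking] is done the atypical expressions agree.'' It is not mere bookkeeping, and as you have set it up it would in fact fail. Carry the comparison out: the qdim side has sign exponent $i(k+1)+k'\bigl(j+1+r(k+1)\bigr)$ (your sine collapse $(-1)^{(k+1)i}$ times the prefactor $(-1)^{m(t-1)}=(-1)^{k'(j+1+r(k+1))}$), whereas the printed Corollary \ref{cor:logHopf}, after the index translation you use, gives exponent $i(k+1)+kk'r$. The difference is $k'(j+1+r)$, which is odd for instance when $r=2$, $j=0$, $k'=1$; so taking the corollary at face value the two sides disagree by $(-1)^{k'(j+1+r)}$ and your argument cannot close. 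The way out --- and what the paper's computation actually amounts to --- is to bypass the corollary and evaluate the coefficient $a_{S_i\otimes\mathbb{C}^H_{k'r}}$ directly from Theorem \ref{thm:links} combined with \eqref{eq:hopflinks}: the Hopf link scalar on $V_\lambda$ is $\frac{\{(i+1)\lambda\}}{\{\lambda\}}q^{k'r\lambda}$, and evaluating at $\lambda=1+j-r+kr$, using $\{x+nr\}=(-1)^n\{x\}$ and $q^{k'r\lambda}=(-1)^{k'\lambda}$, yields
\begin{equation*}
a_{S_i\otimes\mathbb{C}^H_{k'r}}=(-1)^{i(k+1)+k'(j+1+r(k+1))}\,\frac{\{(i+1)(j+1)\}}{\{j+1\}},
\end{equation*}
which matches the qdim exponent on the nose. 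In other words, Corollary \ref{cor:logHopf} as printed carries a sign slip of $(-1)^{k'(j+1+r)}$ relative to what Theorem \ref{thm:links} produces, and a correct proof must route through the deformation theorem (or the corrected corollary) rather than through the corollary as stated; your proposal, by resting on the printed coefficient and postponing the verification, misses this.
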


\begin{proof}
The proof follows directly by comparing Corollary \ref{cor:logHopf} and the quantum dimensions listed in section \ref{sec:singlet}, namely
\begin{align*}
\text{\rm qdim}[F^{\frac{-i \alpha}{\sqrt{2r}}}_{\frac{\beta + r -1}{\sqrt{2r}}}]&=e^{\pi \frac{-i \alpha}{\sqrt{2r}}(2\frac{\beta + r -1}{\sqrt{2r}}-\alpha_0)}\frac{(e^{- \pi \sqrt{2r}\frac{-i \alpha}{\sqrt{2r}}i^2}-e^{\pi \sqrt{2r}\frac{-i \alpha}{\sqrt{2r}}i^2})}{(e^{-\pi \frac{\sqrt{2}}{\sqrt{r}}\frac{-i \alpha}{\sqrt{2r}}i^2}-e^{\pi \frac{\sqrt{2}}{\sqrt{r}}\frac{-i \alpha}{\sqrt{2r}}i^2})} 
=e^{\frac{-\pi i \alpha \beta}{r}}\frac{(e^{\frac{-\pi i \alpha r}{r}}-e^{\frac{\pi i \alpha r}{r}})}{(e^{\frac{-\pi i \alpha}{r}}-e^{\frac{\pi i \alpha}{r}})} \\
&=q^{\alpha \beta}\frac{ \{r \alpha\}}{\{ \alpha\}}
=\frac{t_{V_{\alpha}}(\Phi_{V_{\beta},V_{\alpha}})}{t_{V_{\alpha}}(\Phi_{S_0,V_{\alpha}})},
\end{align*}
\begin{align*}
\text{qdim}[M_{1-k,j+1}^{\frac{-i \alpha}{\sqrt{2r}}}]&=e^{\pi k \sqrt{2r} \frac{-i \alpha}{\sqrt{2r}}}\frac{(e^{- \pi (j+1)\frac{\sqrt{2}}{\sqrt{r}} \frac{-i \alpha}{\sqrt{2r}} i^2}-e^{ \pi (j+1)\frac{\sqrt{2}}{\sqrt{r}} \frac{-i \alpha}{\sqrt{2r}} i^2})}{(e^{- \pi \frac{\sqrt{2}}{\sqrt{r}} \frac{-i \alpha}{\sqrt{2r}} i^2}-e^{ \pi \frac{\sqrt{2}}{\sqrt{r}} \frac{-i \alpha}{\sqrt{2r}} i^2})}\\
&=e^{- \pi k i \alpha}\frac{(e^{\frac{- \pi (j+1)i \alpha}{r}}-e^{\frac{\pi (j+1)i \alpha}{r}})}{(e^{\frac{- \pi i \alpha}{r}}-e^{ \frac{\pi i \alpha}{r}})}
=q^{rk \alpha}\frac{ \{(j+1) \alpha\}}{\{ \alpha \}}
=\frac{t_{V_{\alpha}}(\Phi_{S_j \otimes \Ck, V_{\alpha}})}{t_{V_{\alpha}}(\Phi_{S_0 \otimes V_{\alpha}})}
\end{align*}
and for $Re(\epsilon)<B_\epsilon^r$ together with $\epsilon \in \mathbb{S}(k,j+1+r(k+1))$
\begin{align*}
\text{qdim}[M^{\epsilon}_{1-k',i+1}]&=(-1)^{(j+kr+1-r)k'}\frac{\{(i+1)(j+1+r(k+1))\}}{\{j+1+r(k+1)\}} \\
&=(-1)^{i(k+1)+(j+kr+1-r)k'}\frac{\{(i+1)(j+1)\}}{\{j+1\}}=\frac{t_{P_j \otimes \Ck}(\Phi_{S_i\otimes \mathbb{C}_{k'r}^H,P_j \otimes \Ck} \circ x_{j,k})}{t_{P_j \otimes \Ck}(\Phi_{S_0,P_j \otimes \Ck} \circ x_{j,k})}.
\end{align*}
Finally in this region both $\text{\rm qdim}[F^{\epsilon}_{\frac{\beta + r -1}{\sqrt{2r}}}]$ and $\frac{t_{P_j \otimes \Ck}(\Phi_{V_\beta, P_j \otimes \Ck} \circ x_{j,k})}{t_{P_j \otimes \Ck}(\Phi_{S_0,P_j \otimes \Ck} \circ x_{j,k})}$ vanish.
\end{proof}

\begin{corollary}\label{cor:comp}
Let $\alpha \in \ddot{\mathbb{C}}$. Then the map $\varphi :V_{\alpha} \mapsto F_{\frac{\alpha+r-1}{\sqrt{2r}}}$, $S_i \otimes \Ck \mapsto M_{1-k,i+1}$ extended linearly over direct sums for $\alpha \in \ddot{\mathbb{C}}$ is a morphism up to equality of characters.
\end{corollary}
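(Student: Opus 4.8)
The plan is to realize $\varphi$ as an intertwiner between two systems of one-dimensional characters that encode the respective ring structures, so that the multiplicativity of $\varphi$ becomes the multiplicativity of these characters, already supplied by Proposition \ref{prop:comp}.

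First I would record that open Hopf links give ring homomorphisms out of the Grothendieck ring $\mathrm{Gr}(\mathcal{C})$ of $\U$. Naturality of the double braiding gives $\Phi_{X\otimes Y, W} = \Phi_{X,W}\circ \Phi_{Y,W}$ together with $\Phi_{S_0,W} = \mathrm{Id}_W$. For a simple $W=V_\alpha$ each $\Phi_{X,V_\alpha}$ is a scalar, so $h_\alpha \colon X \mapsto t_{V_\alpha}(\Phi_{X,V_\alpha})/t_{V_\alpha}(\Phi_{S_0,V_\alpha})$ is a ring homomorphism to $\mathbb{C}$. For a projective $W = P_j\otimes\Ck$, writing $\Phi_{X,W} = a_X\,\mathrm{Id} + b_X\,x_{j,k}$ and using $x_{j,k}^2=0$ gives $\Phi_{X\otimes Y,W} = a_Xa_Y\,\mathrm{Id} + (a_Xb_Y+a_Yb_X)\,x_{j,k}$ and $\Phi_{X,W}\circ x_{j,k} = a_X\,x_{j,k}$; hence $h_{j,k}\colon X \mapsto t_W(\Phi_{X,W}\circ x_{j,k})/t_W(\Phi_{S_0,W}\circ x_{j,k}) = a_X$ is again a ring homomorphism. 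On the singlet side, for each fixed $\epsilon$ the asymptotic dimension $\mathrm{ch}[Z^\epsilon]\mapsto \qdim{Z^\epsilon}$ is a ring homomorphism from the Verlinde algebra \eqref{CM-fusion} to $\mathbb{C}$, and $\qdim{\cdot}$ is additive over direct sums.

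With this in hand, Proposition \ref{prop:comp} is exactly the statement that $\qdim{\varphi(-)^\epsilon} = h_\bullet$ on simples, for the parameter matching there (either $h_\alpha$ in the regime $\mathrm{Re}(\epsilon)>B_\epsilon^r$ or $h_{j,k}$ in $\mathbb{S}(k,j+1+r(k+1))$). Decomposing a $\U$-tensor product into simples and extending $\varphi$ additively, I would then compute for all $X,Y$ and all $\epsilon$
\begin{align*}
\qdim{\varphi(X\otimes Y)^\epsilon} &= h_\bullet(X\otimes Y) = h_\bullet(X)\,h_\bullet(Y) \\
&= \qdim{\varphi(X)^\epsilon}\,\qdim{\varphi(Y)^\epsilon} = \qdim{(\varphi(X)\times\varphi(Y))^\epsilon},
\end{align*}
so that $\qdim{(\varphi(X\otimes Y)-\varphi(X)\times\varphi(Y))^\epsilon}=0$ identically in $\epsilon$.

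The final and main step is to upgrade this vanishing of all asymptotic dimensions to equality of characters, i.e.\ to show that the functionals $\{\qdim{(-)^\epsilon}\}_\epsilon$ jointly separate the characters in the image of $\varphi$. For this I would feed in the explicit formulas for $\qdim{F_\lambda^\epsilon}$ and $\qdim{M_{t,s}^\epsilon}$ from Section \ref{sec:singlet}: as functions of $\epsilon$ they are products of powers $q_\epsilon^{(\cdot)}$ with trigonometric factors, and distinct simple modules contribute distinct exponents in $\epsilon$, whence a linear-independence-of-exponentials argument forces any integral combination of characters with identically vanishing $\qdim{(-)^\epsilon}$ to be zero. This yields $\mathrm{ch}[\varphi(X\otimes Y)] = \mathrm{ch}[\varphi(X)]\times\mathrm{ch}[\varphi(Y)]$, the asserted ring morphism up to equality of characters. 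I expect this separation step to be the only genuine obstacle; the alternative of importing the $\mathcal{C}$-fusion rules from \cite{CGP}, applying $\varphi$, and matching termwise against \eqref{CM-fusion} (for instance $V_\alpha\otimes V_\beta = \bigoplus_{\ell=0}^{r-1}V_{\alpha+\beta+(r-1)-2\ell}$ landing on the first line of \eqref{CM-fusion}) sidesteps separation at the cost of a longer bookkeeping computation.
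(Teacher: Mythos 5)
Your route is the paper's own: the printed proof consists of exactly your two options, namely that the claim ``can be directly verified via computation'' (your bookkeeping alternative via the $\mathcal{C}$-fusion rules of \cite{CGP}) or that it follows because ``the Hopf links as well as the asymptotic dimensions uniquely specify the (conjectured) tensor ring up to equality of characters,'' with the uniqueness outsourced to Theorem 28 of \cite{CM} and to \cite{CGP}; your main argument is this second option fleshed out, with Proposition \ref{prop:comp} as the bridge, so you have reconstructed the intended proof in more detail than the paper gives.

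That said, the detail you supply at what you rightly call the crux needs repair on two counts. First, ``decomposing a $\U$-tensor product into simples'' is not available: tensor products of simples in $\mathcal{C}$ are in general non-semisimple (they can contain the indecomposables $P_i\otimes\Ck$), so the decomposition must be taken in the Grothendieck ring; this is harmless, since $X\mapsto\Phi_{X,W}$, hence your $h_\bullet$, and $\varphi$ taken up to characters all factor through $[X]$, and likewise reducible Fock modules appearing on the right of \eqref{CM-fusion} must be re-expanded into simple characters --- but it has to be said. Second, and more seriously, the claim that ``distinct simple modules contribute distinct exponents in $\epsilon$'' is false: in powers of $q_\epsilon^{\alpha_-}$ each simple contributes a whole block of consecutive exponents, and these blocks overlap. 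For instance $\qdim{M_{t,1}^\epsilon}=q_\epsilon^{-(t-1)\alpha_+}$ is one of the three exponentials constituting $\qdim{M_{t,3}^\epsilon}$, and genuine block cancellations occur: for $\re(\epsilon)>B_\epsilon^r$ one has
\begin{equation*}
\qdim{M_{t,s}^\epsilon}+\qdim{M_{t+1,r-s}^\epsilon}=\qdim{F_\lambda^\epsilon},\qquad 2\lambda=\alpha_0+(tr-s)\alpha_-,
\end{equation*}
identically in $\epsilon$ (this $F_\lambda$ is a reducible Fock module with those two composition factors, so there is no contradiction with the corollary, but it destroys naive linear independence of the $\qdim$ functions). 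The separation you need is still true on the span of classes of \emph{simple} modules, but proving it requires an extremal-term argument rather than disjointness: distinct simples have pairwise distinct leftmost exponents (in $\alpha_-$-units the endpoint $(t-1)r-s+1$ determines $(t,s)$ for $1\le s\le r$, and the non-integral block centers $(2\lambda-\alpha_0)/\alpha_-$ determine $\lambda\notin L'$), so one strips off coefficients one block at a time. Either supply that refinement or do what the paper does and quote Theorem 28 of \cite{CM} for the uniqueness; as literally written, your justification of the key step does not stand.
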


\begin{proof}
This can be directly verified via computation. It however also follows since the Hopf links as well as the asymptotic dimensions uniquely specify the (conjectured) tensor ring up to equality of characters; see Theorem 28 of \cite{CM} and \cite{CGP}. 
\end{proof}

\section{Towards braided tensor category structure on $\mathcal{M}(r)$-Mod}

\subsection{$C_1$-cofiniteness}

In this section we obtain sufficient conditions for the existence of a braided tensor category (and more) structure on a suitable category of 
$\mathcal{M}(r)$-modules.
In other words, we discuss applicability of the Huang-Lepowsky-Zhang tensor product theory \cite{HLZ} to $\mathcal{M}(r)$-Mod.
We also comment on the $r=2$ case due to recent rigorous derivation of the fusion ring for $\mathcal{M}(2)$ \cite{AM3}. For $r \geq 3$, this ring 
is known only conjecturally \cite{CM}. We believe that the powerful technique from \cite{AM3} can be extended to all $r$.

Recall that a $V$-module $M$ is $C_1$-cofinite if the subspace 
$$C_1(M):= \langle v_{-1} m :   v \in V , {\rm wt}(v)>0, m \in M  \rangle$$
 is of finite-codimension in $M$. 
We shall focus on the category of finite-length $C_1$-cofinite modules, denoted by $\mathcal{M}(r)$-Mod. Observe that this category is closed 
under finite products/coproducts and taking quotients. So it has an abelian category structure and thus we are in the position to apply parts of \cite{HLZ} theory.
The next result is needed in the proof of the main result.

\begin{lemma} \label{c1-vir} Let $L(c,h)$ be a non-generic (i.e. not isomorphic to the Verma module) Virasoro module for the Virasoro vertex operator algebra
$L(c,0)$. Then $L(c,h)$ is $C_1$-cofinite, viewed as an $L(c,0)$-module.
\end{lemma}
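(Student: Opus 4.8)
The plan is to prove $C_1$-cofiniteness by showing that the quotient $L(c,h)/C_1\bigl(L(c,h)\bigr)$ is finite-dimensional, working directly with the Virasoro modes. Write $M=L(c,h)$, let $v_h$ be the generating highest weight vector, and recall that $M=\bigoplus_{n\ge 0}M_{h+n}$ is $\mathbb N$-graded with one-dimensional top space $M_h=\mathbb C v_h$ and finite-dimensional graded pieces. Because $C_1(M)$ is generated by the homogeneous elements $v_{-1}m$, it is a graded subspace, so $M/C_1(M)$ is graded and it is enough to bound each graded piece and show it vanishes in high degree.

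First I would record that $C_1(M)$ absorbs every negative mode $L_{-n}$ with $n\ge 2$. The algebra $V=L(c,0)$ is strongly generated by the conformal vector $\omega\in V_2$, and with the convention $Y(\omega,z)=\sum_n L_n z^{-n-2}$ one has $\omega_{-1}=L_{-2}$, so $L_{-2}m=\omega_{-1}m\in C_1(M)$ for all $m$. The translation property $[L_{-1},v_k]=(L_{-1}v)_k=-k\,v_{k-1}$ gives $L_{-1}(v_{-1}m)=v_{-1}(L_{-1}m)+(L_{-1}v)_{-1}m$; since $v\in V_+$ forces $L_{-1}v\in V_+$, both summands lie in $C_1(M)$, whence $L_{-1}C_1(M)\subseteq C_1(M)$. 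Combining $L_{-2}m\in C_1(M)$ with the relation $[L_{-1},L_{-(n-1)}]=(n-2)L_{-n}$ and inducting on $n$ then yields $L_{-n}m\in C_1(M)$ for all $n\ge 2$ and all $m\in M$. Hence any PBW monomial $L_{-n_1}\cdots L_{-n_k}v_h$ whose largest index satisfies $n_1\ge 2$ lies in $C_1(M)$, so $M/C_1(M)$ is spanned by the images of the vectors $L_{-1}^{\,k}v_h$ and each graded piece $\bigl(M/C_1(M)\bigr)_{h+k}$ is at most one-dimensional.

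It remains to show these images vanish for large $k$, which is where non-genericity enters. Since $M\not\cong M(c,h)$, the Verma module has a nonzero maximal proper submodule $J$ with $M=M(c,h)/J$, and the lowest nonzero graded piece of $J$ contains a singular vector $w$ at some level $N\ge 1$. The plan is to use the standard fact that such a singular vector can be normalized as $w=\bigl(L_{-1}^{\,N}+\text{(monomials containing some }L_{-m},\,m\ge 2)\bigr)v_h$, that is, its coefficient on $L_{-1}^{\,N}v_h$ is nonzero. Passing to $M$, where $w=0$, and using the previous paragraph to place every remaining monomial in $C_1(M)$, I obtain $L_{-1}^{\,N}v_h\in C_1(M)$; applying $L_{-1}$ repeatedly and using $L_{-1}C_1(M)\subseteq C_1(M)$ gives $L_{-1}^{\,k}v_h\in C_1(M)$ for all $k\ge N$. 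Therefore $M/C_1(M)$ is spanned by the images of $v_h,L_{-1}v_h,\dots,L_{-1}^{\,N-1}v_h$, so $\dim M/C_1(M)\le N<\infty$.

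The hard part is the single input used at the end: that the lowest singular vector is not itself absorbed by $C_1$, equivalently that its $L_{-1}^{\,N}$-coefficient does not vanish. This is the genuine content of the lemma, and I would justify it by the explicit normalized constructions of Virasoro singular vectors (the Beno\^{\i}t--Saint-Aubin formula when $N=rs$ with $r=1$ or $s=1$, and the Bauer--Di Francesco--Itzykson--Zuber recursion in general, or the treatment in Iohara--Koga), whose leading term is precisely $L_{-1}^{\,N}v_h$. As a sanity check, at level $N=2$ the singular vector is proportional to $\bigl(L_{-2}-\tfrac{3}{2(2h+1)}L_{-1}^{2}\bigr)v_h$, and its vanishing in $M$ together with $L_{-2}v_h\in C_1(M)$ forces $L_{-1}^{2}v_h\in C_1(M)$ and hence $\dim M/C_1(M)\le 2$.
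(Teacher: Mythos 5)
Your proof is correct and follows essentially the same route as the paper's: both hinge on the standard fact that the lowest singular vector of a non-generic Verma module has nonvanishing $L_{-1}^{N}$-coefficient, place all monomials involving $L_{-m}$ ($m\ge 2$) into $C_1$, and use $L_{-1}C_1\subseteq C_1$ to conclude that $L(c,h)/C_1(L(c,h))$ is spanned by the images of $L_{-1}^{i}v_h$, $0\le i<N$. The only difference is that you carefully verify the absorption steps (via $\omega_{-1}=L_{-2}$ and the commutators $[L_{-1},L_{-(n-1)}]=(n-2)L_{-n}$) that the paper dismisses as clear.
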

\begin{proof} By definition, every non-generic Verma module admits a singular vector $0 \neq w \in M(c,h)$ of weight $n$ (depending on $c$ and $h$). 
From the structure of singular vectors for the Virasoro algebra  (see Chapter 5, \cite{IK}) we get a decomposition 
$$w=L(-1)^n v_{c,h}+ w' v_{c,h}$$
where $v_{c,h}$ is the highest weight vector in $L(c,h)$ and $w' \in U({\rm Vir}_{<0})$, which is lower in the filtration than $L(-1)^n$. 
Clearly, we have $w' v_{c,h} \in C_1(L(c,h))$. 
We claim that 
$$L(c,h)=C_1(L(c,h)) + \sum_{i=0}^{n-1} L(-1)^i v_{c,h}.$$
Denote the right hand-side by $U$. Clearly we only have to prove that $L(-1)^i v_{c,h} \in U$ for $i \geq 0$. This follows immediately 
because of $L(-1)(C_1(L(c,h))) \subset C_1(L(c,h))$.
\end{proof}

\begin{theorem} \label{typ-atyp} All irreducible $\mathcal{M}(r)$-modules are $C_1$-cofinite.
\end{theorem}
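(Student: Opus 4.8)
The plan is to show that every irreducible $\mathcal{M}(r)$-module is realized as a submodule (or subquotient) of some Fock module $F_\lambda$, and that $C_1$-cofiniteness can be established by exploiting the larger vertex algebra structure. The key observation is that $\mathcal{M}(r)$ contains the Virasoro vertex operator algebra $L(c,0)$ as a subalgebra (generated by the conformal vector $\omega$), so every $\mathcal{M}(r)$-module is in particular an $L(c,0)$-module. Since $C_1$-cofiniteness is a finiteness condition on the cokernel of a certain subspace, and since $C_1(M)$ computed over $\mathcal{M}(r)$ contains $C_1(M)$ computed over the smaller algebra $L(c,0)$, finite codimension of the latter would immediately give finite codimension of the former. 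Thus the strategy reduces to controlling the modules as Virasoro modules and applying Lemma \ref{c1-vir}.

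First I would recall the explicit structure of irreducible $\mathcal{M}(r)$-modules. The typical modules are the full Fock spaces $F_\lambda$ for $\lambda \in (\mathbb{C}\setminus L')\cup L$, while the atypical simple modules $M_{t,s}$ sit inside $F_{\alpha_{t,s}}$ as subquotients. As Virasoro modules, each Fock space $F_\lambda$ decomposes into a (generally infinite) direct sum of irreducible Virasoro modules $L(c,h)$, with the conformal weights $h$ determined by the Heisenberg grading. The atypical weights $\alpha_{t,s}$ are precisely those for which the Fock space has a nontrivial submodule structure, corresponding to \emph{non-generic} (degenerate) Virasoro representations; these are exactly the $L(c,h)$ covered by Lemma \ref{c1-vir}. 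For the typical case the relevant Virasoro modules may be generic Verma-type, so a direct application of the lemma does not suffice, and one needs a separate argument.

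The cleanest route I would pursue treats both cases uniformly by using $C_1$-cofiniteness with respect to the Heisenberg/Fock structure together with the generator $H$ of conformal weight $2r-1$. The idea is that $\mathcal{M}(r)$ is strongly generated by $\omega$ and $H$, so $C_1(M)$ is spanned by $\omega_{-1}m = L(-2)m$-type vectors together with $H_{-1}m$ and their analogues in positive weight; the presence of the singular field $H$ of high conformal weight acting on a Fock module should collapse the module to finite codimension modulo $C_1$. Concretely, I would argue that the top graded pieces of $F_\lambda$ (or $M_{t,s}$) of sufficiently high conformal weight all lie in $C_1(M)$, using the fact that $\mathcal{M}(r)$ has a strong generator in every sufficiently high degree beyond a bounded range.

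\textbf{The hard part} will be the typical modules $F_\lambda$, where the underlying Virasoro structure is generic and Lemma \ref{c1-vir} gives no leverage. For these one genuinely needs the extra generator $H$: the $C_1$-subspace taken over $\mathcal{M}(r)$ is strictly larger than the Virasoro $C_1$-subspace, and the whole point is that $H_{-1}$ (and the higher modes) supply enough additional vectors to cut the codimension down to finite. Making this precise requires understanding how $H$, as an operator built from Heisenberg modes, acts on the graded pieces of $F_\lambda$ and verifying that together with $L(-1)$ it generates everything above a finite-dimensional bottom. I would expect this to be the technical crux, and I anticipate the authors handle the typical and atypical cases separately, invoking Lemma \ref{c1-vir} for the atypical $M_{t,s}$ (where the relevant $L(c,h)$ are non-generic) and a Heisenberg/Fock-module argument for the typical $F_\lambda$.
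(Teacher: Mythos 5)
Your proposal contains a genuine gap, and it is concentrated exactly where you place it: the typical modules. You correctly predict that the atypicals are handled via Lemma \ref{c1-vir}, but for the typicals you only sketch a direct attack --- showing that the modes of $H$ together with $L(-1)$ generate everything above a finite-dimensional bottom of $F_\lambda$ --- and you concede you do not know how to carry it out. The paper does something entirely different and more indirect. It first notes that $\dim C_1(F_\lambda)(n)$ depends polynomially on $\lambda$, so for each graded level $n$ the set of ``non-generic'' $\lambda$ where this dimension drops below the generic value $p(n)$ is finite; since the already-proven atypical case supplies infinitely many $C_1$-cofinite Fock spaces, the generic value really is $p(n)$, and hence $F_\lambda$ is $C_1$-cofinite for every $\lambda$ outside a countable set $B$. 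For $\lambda\in B$ the paper then writes $\lambda=\lambda_1+\lambda_2$ with $\lambda_1,\lambda_2\notin B$ (any complex number is a finite sum of elements of the complement of a countable set) and invokes Miyamoto's theorem \cite{Mi1}: if $M$ and $N$ are $C_1$-cofinite and an intertwining operator of type ${W \choose M\ N}$ is surjective, then $W$ is $C_1$-cofinite --- applied to the obvious Heisenberg intertwining operator among $F_{\lambda_1}$, $F_{\lambda_2}$, $F_{\lambda_1+\lambda_2}$. Both of these ingredients (the genericity/polynomiality argument and Miyamoto's surjectivity criterion) are absent from your proposal, and without them your ``technical crux'' remains unresolved.

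A secondary problem is your opening reduction for the atypicals. You argue that since $C_1(M)$ over $\mathcal{M}(r)$ contains $C_1(M)$ over the Virasoro subalgebra, finite codimension over Virasoro would suffice. The implication is trivially true, but the premise fails: $M_{t,s}$ decomposes as an \emph{infinite} direct sum $\bigoplus_{n\geq 0} L^{\rm Vir}(c_{r,1},h^{t,s}_n)$ of irreducible Virasoro modules, each contributing at least its highest weight vector to the quotient by the Virasoro $C_1$-subspace, so no irreducible $\mathcal{M}(r)$-module is $C_1$-cofinite over Virasoro (compare Remark \ref{Vir-Heis}). What actually makes the atypical case work in the paper is the spanning claim that for each $n$ there is a mode $H_j$, $j\leq -1$, with $H_j v^{(n)} = C v^{(n+1)} + w$, $C\neq 0$ and $w$ in the sum of the lower Virasoro pieces; this lets one induct so that, modulo $C_1(M_{t,s})$, only the bottom summand $L^{\rm Vir}(c_{r,1},h^{t,s}_0)$ survives, and only then does Lemma \ref{c1-vir} finish the argument. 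You gesture at this (``$H$ should collapse the module to finite codimension modulo $C_1$'') but never formulate the inductive claim that the modes of $H$ connect consecutive Virasoro summands, which is the actual content of the atypical proof.
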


\begin{proof} 
We first consider atypical modules. It is known that 
$$M_{t,s}=\bigoplus_{n=0}^\infty L^{\rm Vir}(c_{r,1}, h^{t,s}_n),$$
where $L^{Vir}(c_{r,1}, h^{t,s}_n)$ are certain irreducible Virasoro modules of central charge $c_{r,1}$; for explicit formulas see for instance \cite{AM1,AM2, CM}. 
These singlet modules can be realized inside the lattice vertex algebra modules by using the short screening $Q=e^{\alpha}_0$ acting on special highest weight 
vectors $e^{\gamma}$ inside the generalized lattice vertex algebra $V_{L'}$. More precisely, we have
$$M_{t,s}=\bigoplus_{n=0}^\infty U({\rm Vir}_{<0}).v^{(n)}=\bigoplus_{n=0}^\infty U({\rm Vir}_{<0}).Q^n e^{\beta_{t,s}-n \alpha}, \ \ \beta_{t,s} \in L'.$$

{\em Claim:} For every $n \geq 1$, there is a nonzero constant $C$ and $j \leq -1$ (depending on $n$) such that 
$$H_j v^{(n)}=C v^{(n+1)}+ w,$$  
where $$w \in V_n:=\displaystyle{\bigoplus_{i=0}^n }U({\rm Vir}_{<0}).v^{(i)}.$$
This claim follows along the lines of Lemma 5.3 in \cite{AM1}.

It is now sufficient to show that 
\begin{equation} \label{ind-proof}
M_{t,s}=C_1(M_{t,s})+ \bigoplus_{i=0}^{k_{t,s}} L(-1)^i v^{(0)},
\end{equation}
where $k_{t,s} \in \mathbb{N}$ depend only on $L(c_{r,1},h^{t,s}_0)$. Notice that this follows from Lemma \ref{c1-vir} once we prove
that
\begin{equation} \label{span}
H_{-i_1} \cdots H_{-i_k} L_{-j_1} \cdots L_{-j_\ell}v^{(0)},
\end{equation}
$i_m \geq 1$, $1 \leq m \leq k$,  $j_n \geq 1$, $ 1 \leq n \leq \ell$, is a spanning set for $M_{t,s}$. 
Let $V_n$ be defined as above. We shall prove by induction on $n \geq 0$,  that  $V_n$ is spanned by vectors in (\ref{span}). For $n=0$, this follows 
from Lemma \ref{c1-vir}. Now the above claim together with  the inductive hypothesis imply that 
$V_{n+1}$ is spanned by vectors of the form
$$L_{-k_1} \cdots L_{-k_p} H_{-i_1} \cdots H_{-i_k} L_{-j_1} \cdots L_{-j_\ell}v^{(0)}.$$
By using the bracket relations among $L_i$ and $H_j$, we can move all Virasoro generators that are on the left across the $H$-generators to the 
right. The resulting vector is clearly inside the span of vectors in (\ref{span}).

Now we switch to typical modules $F_{\lambda}$, $\lambda \notin L'$. At first we let $\lambda$ to be arbitrary. 
We already know that all atypicals are $C_1$-cofinite. In particular, this family includes infinitely many Fock spaces $F_\lambda$, $\lambda \in L$. 
 We will need the following two known facts. Fact 1. If $A=\mathbb{C} \setminus B$, where $B$ is a countable set, then any complex number can be 
 written as a finite sum of elements in $A$. Fact 2 (Miyamoto \cite{Mi1}) Suppose that $M$ and $N$ are $C_1$-cofinite and 
 $\mathcal{Y} \in { W \choose M \ N  }$ is surjective, then $W$ is also $C_1$-cofinite. We shall apply the latter for $M=F_{\lambda}$, $N=F_{\lambda'}$ 
 and $W=F_{\lambda'+\lambda}$, with $\mathcal{Y}$ the obvious Heisenberg VOA intertwining operator $\mathcal{Y}$ acting among three modules.
 
 Now we proceed with the proof. Clearly, it is sufficient to show that $F_{\lambda}$ is $C_1$-cofinite for  $\lambda \in B$, where $B$  is as above.
 Let  $$F_\lambda=\bigoplus_{n=0}^\infty F_\lambda(n),$$ 
 graded space  decomposition of $F_\lambda$. Then 
 $F_{\lambda}$ being $C_1$-cofinite means that 
 $$F_\lambda(n) \subseteq C_1(F_{\lambda}), \ \ n \geq k,$$
 for some fixed $k \in \mathbb{N}$. Consider $$C_1{F_\lambda}(n):=C_1(F_{\lambda}) \cap F_{\lambda}(n).$$
Because $C_1(F_\lambda)(n)$ depends polynomially on $\lambda$ (due to the fact that it picks up $\lambda$ when we act with the Heisenberg generator $\varphi(0)$ on $v_\lambda$) there will be only finitely many $\lambda$ values for which 
$dim(C_1(F_\lambda)(n))$ will drop ({\em non-generic} values) and for all other values  $dim(C_1(F_\lambda)(n))$
will be constant ({\em generic} values). We know that there is $\lambda$ for which $F_\lambda$ is $C_1$-cofinite and 
thus $dim(C_1(F_\lambda)(n))=p(n)$, here $p(n)$ is the number of partitions of $n$, for all $n \geq k_\lambda$. For every $n \geq k_{\lambda}$, denote 
by $B_i$ the set of all non-generic $\lambda$-values for $dim(C_1(F_\lambda)(i))<p(i)$ (this set is always finite). We let 
$$A:= \mathbb{C} \setminus \bigcup_{i=k_\lambda}^\infty B_i$$
and $B= \bigcup\limits_{i=k_\lambda}^\infty B_i$ is a desired countable set. The proof follows.
\end{proof}

\begin{corollary} All finite-length $\mathcal{M}(r)$-modules are $C_1$-cofinite. 
\end{corollary}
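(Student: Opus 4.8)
The plan is to reduce the corollary to Theorem~\ref{typ-atyp} by exploiting the fact that $C_1$-cofiniteness behaves well under short exact sequences. The key observation I would establish first is that the class of $C_1$-cofinite modules is closed under extensions: if $0 \to M' \to M \to M'' \to 0$ is exact with $M'$ and $M''$ both $C_1$-cofinite, then $M$ is $C_1$-cofinite as well. Granting this closure property, a finite-length $\mathcal{M}(r)$-module $M$ admits a composition series $0 = M_0 \subset M_1 \subset \cdots \subset M_n = M$ whose successive quotients $M_i/M_{i-1}$ are irreducible, hence $C_1$-cofinite by Theorem~\ref{typ-atyp}; a straightforward induction on the length $n$ then yields that $M$ itself is $C_1$-cofinite.

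To prove the extension property, let $\pi\colon M \to M''$ denote the quotient map, so $\ker\pi = M'$. First I would check that $\pi(C_1(M)) = C_1(M'')$: the inclusion $\pi(C_1(M)) \subseteq C_1(M'')$ is immediate from $\pi(v_{-1}m) = v_{-1}\pi(m)$, and surjectivity follows by lifting any $m'' \in M''$ (with $\mathrm{wt}(v)>0$) to some $m \in M$ and noting $\pi(v_{-1}m) = v_{-1}m''$. Consequently $\pi$ descends to a surjection
$$\bar\pi\colon M/C_1(M) \longrightarrow M''/C_1(M''),$$
whose codomain is finite-dimensional by hypothesis. It then remains only to bound the kernel of $\bar\pi$.

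An element $[m] \in M/C_1(M)$ lies in $\ker\bar\pi$ precisely when $\pi(m) \in C_1(M'') = \pi(C_1(M))$; choosing $m_0 \in C_1(M)$ with $\pi(m_0) = \pi(m)$ shows $m - m_0 \in M'$, so that $\ker\bar\pi = (M' + C_1(M))/C_1(M) \cong M'/(M' \cap C_1(M))$. Since $C_1(M') \subseteq M' \cap C_1(M)$, this kernel is a quotient of $M'/C_1(M')$ and hence finite-dimensional. Therefore $\dim M/C_1(M) \le \dim M'/C_1(M') + \dim M''/C_1(M'')$, which completes the extension argument and, via the induction above, the corollary.

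I do not expect a genuine obstacle here, since all of the substantive work has already been carried out in Theorem~\ref{typ-atyp}; the extension lemma is a formal diagram chase. The only point demanding any care is that the composition factors of a finite-length $\mathcal{M}(r)$-module be among the irreducible modules to which that theorem applies, which is automatic because the theorem covers \emph{every} irreducible $\mathcal{M}(r)$-module. The induction step is then purely routine.
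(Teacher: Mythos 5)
Your proposal is correct, and it is essentially the argument the paper has in mind: the paper states this corollary without proof, treating it as immediate from Theorem \ref{typ-atyp}, and the implicit reasoning is exactly what you spell out — closure of $C_1$-cofiniteness under extensions (your diagram chase with $\pi(C_1(M)) = C_1(M'')$ and $\ker\bar\pi$ a quotient of $M'/C_1(M')$ is sound) combined with induction along a composition series whose factors are irreducible and hence covered by the theorem. Nothing is missing; you have simply written out the routine step the authors suppressed.
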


\begin{remark} \label{Vir-Heis} 
It is not hard to see that 
${F}_{\lambda}$ is not $C_1$-cofinite if viewed as a module for the Virasoro vertex operator algebra.
\end{remark}

\subsection{Fusion rules}

Formula (\ref{CM-fusion}), when specialized to $p=2$,  gives the following conjectural relations in the Grothendieck ring
(here $\lambda, \mu \notin L'$):

\begin{eqnarray}
\label{5} && [F_\lambda] \times [F_\mu] = [F_{\lambda+\mu}] +   [F_{\lambda+\mu+\alpha_-}], \qquad\quad \\
\label{6} && [M_{t ,1}]\times  [F_\mu] = [F_{\mu+\alpha_{r,1}}], \\
\label{7} && [M_{t ,1}]\times  [M_{t',1}^\epsilon] =[M_{t+t'-1,1}], \\
\label{8} && [M_{t ,2}^\epsilon]\times  [M_{t',1}^\epsilon] =[M_{t+t'-1,2 }^\epsilon], \\
\label{9} && [M_{t ,2}^\epsilon]\times  [M_{t',2}^\epsilon] =2 [M_{t+t'-1,1}^\epsilon]+ [M_{t+t'-2,1}^\epsilon]+[M_{t+t',1}^\epsilon] \\
 &&=[F_{\alpha_{t+t'-1,1}}]+[F_{\alpha_{t+t'-2,1}}]. \nonumber
\end{eqnarray}

Next we discuss rigorous results pertaining to fusion rules of irreducible $\mathcal{M}(2)$-modules.
As in \cite{AM3}, for a triple of equivalence classes of irreducible $\mathcal{M}(2)$-modules, $[M]$, $[N]$ and $[K]$, where $M$, $N$ and $K$ are representatives of classes, respectively, we define
\begin{equation} \label{fusion}
[M] \times [N] :=\sum_{[K] \in {\rm Irrep}} {\rm dim} \  I \ {K \choose M \ N } [K],
\end{equation}
where Irrep denotes the set of all equivalence classes of irreducible modules.
In \cite{AM3}, D. Adamovi\'c and the second author essentially proved the following result, thus verifying the correctness of fusion rules formulas obtained 
 conje\cite{CM} for $r=2$:
\begin{theorem} \label{am} We have:
\begin{itemize}[leftmargin=*,align=left]
\item[(i)] All typical modules $M_{t,1}$ are simple currents, in the sense that for a given irreducible module $N$ there is a unique irreducible module $M$ 
such that $I { M \choose M_{1,t} \ N }$ is nontrivial and one-dimensional. Moreover, under the product (\ref{fusion}), formula (\ref{5}) holds if $\lambda+\mu \notin L'$, and 
formulas (\ref{6})-(\ref{8}) also hold.
\item[(ii)] If $\lambda+\mu \in L'$, then there exists a logarithmic module $P$ such that in the Grothendieck group $[P]=[F_{\lambda+\mu}] +   [F_{\lambda+\mu+\alpha_-}]$
and $I \ { P \choose  F_{\lambda} \ F_{\mu}  } \neq 0$.
\item[(iii)] There exists a logarithmic module $Q$, with $[Q]=2 [M_{t+t'-1,1}^\epsilon]+ [M_{t+t'-2,1}^\epsilon]+[M_{t+t',1}^\epsilon]$ such that  
$I \ { Q \choose  M_{t ,2} \  M_{t',2} } \neq 0$.
\end{itemize}
\end{theorem}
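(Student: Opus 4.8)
The plan is to reduce everything to the explicit Fock/lattice realization of $\mathcal{M}(2)$-modules recalled in the proof of Theorem \ref{typ-atyp}, and to compute the spaces $I\binom{K}{M\,N}$ directly, following \cite{AM3}. Since fusion is defined in (\ref{fusion}) purely through dimensions of intertwining-operator spaces, the argument splits into producing lower bounds (explicit nonzero intertwining operators) and matching upper bounds (a Frenkel--Zhu / Zhu-algebra count). The $C_1$-cofiniteness established in Theorem \ref{typ-atyp}, together with \cite{Mi1}, guarantees that the relevant products and iterates of intertwining operators converge, so that these dimension counts are meaningful.

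For part (i) I would exploit that each $M_{t,1}$ is generated from a single lattice vector $e^{\beta_{t,1}}$, $\beta_{t,1}\in L'$, and that the Heisenberg/lattice vertex operator $Y(e^{\beta_{t,1}},z)$ defines a nonzero intertwining operator of type $\binom{M}{M_{t,1}\,N}$ precisely when the charge of $M$ matches the sum of the charges. This gives the lower bound and shows that the target $M$ is uniquely determined, hence fusion with $M_{t,1}$ permutes the irreducibles bijectively (its inverse being $M_{2-t,1}$ up to labelling), which is the simple-current property. For the one-dimensionality and formulas (\ref{6})--(\ref{8}) I would bound $\dim I\binom{K}{M_{t,1}\,N}$ from above by a $\mathrm{Hom}$-space of top-level $A(\mathcal{M}(2))$-modules in Li's non-semisimple version of Frenkel--Zhu, and check that these upper bounds coincide with the lower bounds. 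Formula (\ref{5}) for $\lambda+\mu\notin L'$ is the generic Fock case and follows identically from the $\mathfrak{sl}_2$-type truncation of the Heisenberg intertwiner, the sum having exactly two terms.

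The genuinely new content is in (ii) and (iii), where $\lambda+\mu\in L'$ is resonant: the naive fusion product is non-semisimple, so no direct sum of irreducibles can carry the operator and a logarithmic module must appear. Here I would construct $P$ and $Q$ explicitly as indecomposables on which $L(0)$ acts with nontrivial nilpotent part, either as subquotients of lattice vertex-algebra modules or --- in the spirit of the deformable families of this paper --- by differentiating a one-parameter family of ordinary intertwining operators in the Fock label and extracting the $\log z$ term; this simultaneously yields the nonzero logarithmic intertwining operators, proving $I\binom{P}{F_\lambda\,F_\mu}\neq 0$ and $I\binom{Q}{M_{t,2}\,M_{t',2}}\neq 0$. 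The Grothendieck-group identities $[P]=[F_{\lambda+\mu}]+[F_{\lambda+\mu+\alpha_-}]$ and $[Q]=2[M_{t+t'-1,1}]+[M_{t+t'-2,1}]+[M_{t+t',1}]$ I would read off from the composition series of $P$ and $Q$, cross-checked against the regularized characters and the Verlinde-type formula (\ref{CM-fusion}) of \cite{CM}.

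The main obstacle is twofold. First, the upper bounds: because $A(\mathcal{M}(2))$ is not semisimple, the naive Frenkel--Zhu count gives only an inequality, and upgrading it to an equality --- hence pinning down the coefficients in (\ref{6})--(\ref{8}) and excluding spurious summands --- requires a careful analysis of the bimodule $A(M_{t,s})$ and of lowest-weight obstructions, which is the technical heart of \cite{AM3}. Second, verifying that the logarithmic operators into $P$ and $Q$ are genuinely nonzero, and that $P,Q$ have \emph{exactly} the claimed composition factors rather than merely the claimed Grothendieck class by a character coincidence, demands controlling the nilpotent $L(0)$-action together with the convergence and associativity of the logarithmic intertwining operators, for which the $C_1$-cofiniteness of Theorem \ref{typ-atyp} and the HLZ framework \cite{HLZ} are the essential inputs.
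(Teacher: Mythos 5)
Your plan is sound and, in substance, it reconstructs the strategy of the reference the paper actually leans on: the paper does not prove this theorem internally, but states that parts (i) and (ii) are proven in \cite{AM3} (in the lattice vertex algebra setup), that (iii) can also be obtained from \cite{AM3}, and then gives an alternative, much shorter argument for (iii). Your lower-bound/upper-bound scheme --- explicit lattice intertwiners $Y(e^{\beta_{t,1}},z)$ for the simple-current statement, Zhu-type counts for one-dimensionality, and logarithmic modules obtained by differentiating a one-parameter family of Fock intertwining operators to extract $\log z$ terms --- is exactly the kind of analysis carried out in \cite{AM3}, and you correctly flag its two genuinely hard points (the non-semisimple Frenkel--Zhu upper bound, and nonvanishing plus composition series of the logarithmic operator into $P$ and $Q$). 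But note that you flag these as obstacles rather than resolve them, so your proposal is a roadmap to re-deriving \cite{AM3} rather than a proof; the paper simply outsources those steps by citation. The one idea you miss is the paper's shortcut for (iii): since $\mathcal{M}(2)$ is a subalgebra of the rank-one symplectic fermion algebra, and Runkel \cite{Ru} has already constructed the logarithmic intertwining operator for symplectic fermion modules, restricting that operator to singlet modules immediately produces the required nonzero element of $I\binom{Q}{M_{t,2}\ M_{t',2}}$ --- no new construction, convergence analysis, or nilpotent-$L(0)$ bookkeeping is needed. What each approach buys: yours, if completed, is self-contained and pins down the exact fusion coefficients in (\ref{6})--(\ref{8}); the paper's free-field restriction is economical and gets the existence statement in (iii) essentially for free, at the cost of importing the symplectic fermion technology.
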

Parts (i) and (ii) are already proven in \cite{AM3} in the setup of lattice vertex algebras. 
Part (iii) can be also proven by using \cite{AM3} (additional details will appear elsewhere). Alternatively, we can also argue as follows. As we know, the $r=2$ case corresponds to a subalgebra of the rank one symplectic fermions.  Logarithmic intertwining operator based on symplectic fermions modules was constructed in  \cite{Ru}, albeit in a slightly different setup. By virtue of restriction to the singlet algebra modules we obtain a family of (logarithmic) intertwining operators, leading to (iii).

\subsection{Tensor category structure}
According to [HLZ], in order for a suitably (sub)category of $V$-modules $\mathcal{C}$ to have the structure of a braided tensor category,  
it suffices that assumptions 10.1,12.1 and 12.2 in \cite{HLZ} hold.
Let ${C}$ denote the category of $\mathbb{N}$-gradable $C_1$-cofinite $\mathcal{M}(r)$-modules. 
By \cite{Mi1} any such module is logarithmic, that is its graded components decompose into generalized spaces with respect to the Virasoro generator $L(0)$.
Denote its full subcategory of finite-length $(1,r)$-modules by ${C}_{fin}$.  Now we find sufficient conditions for applicability 
of the HLZ-tensor product theory.

\vskip  5mm

Assumptions 10.1 (i)-10.1 (v) are satisfied in both categories. Assumptions 12.1 and 12.2 are also satisfied if every 
finitely generated $\mathbb{N}$-graded, generalized module is $C_1$-cofinite (see \cite{HLZ} for details), which we assume below.  It remains to analyze:

Assumption 10.1 (vi): For any object of $\mathcal{C}$, the (generalized) weights are real numbers and in addition there
exists $K \in \mathbb{Z}_+$ such that $(L(0)-L(0)_{ss})^K = 0$ on the generalized module. Notice that this condition 
holds in $C_{fin}$. Indeed, a finite-length module $M$ must satisfy this condition because of $K \leq \ell (M)$, where $\ell(M)$ is the length of $M$. This condition might not  hold in $C$; see however below.

Assumption 10.1 (vii): $\mathcal{C}$ is closed under images, under the contragredient functor, under taking finite direct
sums, and under $P(z)$-tensor products for some $z \in \mathbb{C}^\times$.
Because the dual of a finite length modules is of finite length, $C_{fin}$ is closed under the contragredient functor.
However, in the category $C$, it is a priori not clear that the dual of a module remains $C_1$-cofinite.  Finally, we have to check whether our categories are closed under the $P(z)$-tensor product. For the category $C$, this is implicitly verified in the paper of Miyamoto (see Main Theorem in \cite{Mi1}), albeit in a slightly different formulation. 
But for $C_{fin}$, it is not clear whether their tensor product (which exists) is still in $C_{fin}$. Based on what we already know about $C$ we believe that

\begin{theorem}\label{thm:M2} Assume that ${C}={C}_{fin}$, that is, every $C_1$-cofinite $\mathbb{N}$-graded module is of finite length and that every finitely generated generalized, $\mathbb{N}$-graded $\mathcal{M}(r)$-module is $C_1$-cofinite. Then, by virtue of \cite{HLZ}, the category ${C}$ can be equipped with a braided tensor structure. \end{theorem}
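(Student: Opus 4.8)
The plan is to verify that, granting the two hypotheses, the category $C = C_{fin}$ satisfies Assumptions 10.1, 12.1 and 12.2 of \cite{HLZ}; the braided tensor structure then follows at once from the main construction of that theory. The conceptual point organizing the argument is that the identification $C = C_{fin}$ simultaneously repairs the two separate gaps isolated in the discussion above: the category $C$ is closed under $P(z)$-tensor products by Miyamoto's theorem but its objects are not obviously dualizable within the category, whereas $C_{fin}$ is closed under contragredients and meets the nilpotency requirement of 10.1(vi) but is not obviously closed under tensor products. Once the two descriptions coincide, each closure property is inherited from whichever description makes it transparent.

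Concretely, I would proceed assumption by assumption. Assumptions 10.1(i)--(v) hold in either description and need no further comment. For 10.1(vi) I would use the finite-length description: every object $M$ has finite length $\ell(M)$, so $(L(0) - L(0)_{ss})^K = 0$ with $K \le \ell(M)$, and the reality of the generalized weights holds on $C_{fin}$ as noted above. For 10.1(vii), closure under images and finite direct sums is immediate from the abelian structure; closure under the contragredient functor follows because the dual of a finite-length module is again of finite length, hence lies in $C_{fin} = C$ and is in particular $C_1$-cofinite; and closure under the $P(z)$-tensor product is precisely Miyamoto's main theorem \cite{Mi1}, whose $C_1$-cofinite output automatically has finite length since $C = C_{fin}$ and therefore remains in $C$. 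Finally, Assumptions 12.1 and 12.2 reduce, as recorded in \cite{HLZ}, to the statement that every finitely generated $\mathbb{N}$-graded generalized $\mathcal{M}(r)$-module is $C_1$-cofinite, which is exactly the second hypothesis.

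With all the HLZ hypotheses in force, their construction of the $P(z)$-tensor product bifunctor, of the associativity and unit isomorphisms, and of the braiding applies verbatim, yielding the braided tensor category structure on $C$. The principal obstacle is not computational but lies upstream of the present argument: it is the identification $C = C_{fin}$, the first of the two hypotheses, asserting that $C_1$-cofiniteness forces finite length. This is the genuinely hard representation-theoretic input we are presently unable to establish unconditionally; everything downstream of it is an assembly of Miyamoto's tensor-closure result with the finite-length structure and with the $C_1$-cofiniteness of irreducibles proved in Theorem \ref{typ-atyp} (and its extension to all finite-length modules).
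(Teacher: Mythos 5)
Your proposal is correct and follows essentially the same route as the paper: the paper's justification for this theorem is precisely the assumption-by-assumption discussion preceding it, where $C=C_{fin}$ lets 10.1(vi) and closure under contragredients come from the finite-length description, closure under $P(z)$-tensor products come from Miyamoto's theorem applied to $C$, and 12.1--12.2 come from the second hypothesis. Your observation that the identification $C=C_{fin}$ simultaneously repairs the two complementary gaps is exactly the paper's organizing point.
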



\begin{remark} \label{last} It is perhaps unclear why  the Heisenberg vertex algebra has not been much discussed in the context of tensor categories. For one, this vertex algebras is much easier to study.  The reason is that the category of $F_0$-modules is much less interesting compared to the singlet algebra and in addition does not produce any non-trivial quantum invariants. The category of modules for the Heisenberg vertex algebra $F_0$ that are diagonalizable under the zero mode subalgebra $\frak{h}$
is known to be semisimple. Intertwining operators, tensor product and associativity in the sense of \cite{HLZ} can be easily verified and all irreducible 
modules are clearly $C_1$-cofinite, see \cite{CKLR} for details. There is an enlargement of this category by inclusion of generalized (or logarithmic) $F_0$-modules \cite{Mil1}. 
(Logarithmic) intertwining operators of logarithmic $F_0$-modules are completely classified and explicitly constructed in \cite{Mil1} (see also \cite{Ru}). Now we consider the subcategory of finite-length $F_0$-modules. It is not hard to see that any finitely generated, generalized $\mathbb{N}$-graded  $F_0$-module is of finite-lenght. By combining results of \cite{Mil1, Ru} with \cite{HLZ}, we infer that this category is indeed braided. Again, this new category is not terribly interesting; for instance, as a tensor category, it is equivalent to the category of finite-dimensional $\frak{h}$-modules equipped with the 
usual tensor product. Of course, it would be desirable to work out complete details even in the Heisenberg case, but that is outside the scope of the present paper (see however \cite{Ru}).
\end{remark}

\hspace*{5mm}

\noindent Department of Mathematical and Statistical Sciences, University of Alberta,
Edmonton, Alberta  T6G 2G1, Canada. 
\emph{emails: creutzig@ualberta.ca, mrupert@hotmail.ca}

\hspace*{5mm}

\noindent Department of Mathematics and Statistics, SUNY-Albany, 1400 Washington Avenue, Albany, NY 12222, USA.
\emph{email: amilas@albany.edu}


\begin{thebibliography}{BCGP}

\bibitem[AC]{AC} C. Alfes and T. Creutzig, The mock modular data of a family of superalgebras. Proc. Amer. Math. Soc. 142 (2014), no. 7, 2265–-2280.

\bibitem[AD]{AD} D. Adamovi{\'c}, Classification of irreducible modules of certain subalgebras of free boson vertex algebra. Journal of Algebra, 270(1), 115--132.

\bibitem[AM1]{AM1}D. Adamovi{\'c} and A. Milas ,  Logarithmic intertwining operators and $\mathcal{W}(2,2p-1)$-algebras, Journal of Mathematical Physics , 48 073503 (2007).

\bibitem[AM2]{AM2} D. Adamovi{\'c}, and A. Milas, On the triplet vertex algebra {${\mathcal W}(p)$},
  {\textit{Adv. Math.}} {217} (2008), 2664--2699.

\bibitem[AM3]{AM3} D.Adamovi{\'c}, and A. Milas, Some applications and constructions of intertwining operators in LCFT, submitted.

\bibitem[ADO]{ADO}  Y. Akutsu, T. Deguchi and T. Ohtsuki, Invariants of colored links. {\em J. Knot Theory Ramifications} {\bf 1} (1992), 161--184.

\bibitem[BCGP]{BCGP} C. Blanchet, F. Costantino, N. Geer, and B. Patureau-Mirand - Non semi-simple TQFTs, Reidemeis-ter torsion and Kashaev's invariants, {arXiv:1404.7289}.

\bibitem[BFM]{BFM} K. Bringmann, A. Folsom, and A. Milas. Asymptotic behavior of partial and false theta functions arising from Jacobi forms and regularized characters, {arXiv:1604.01977}.


\bibitem[CGP1]{CGP} F. Costantino, N. Geer and B. Patureau-Mirand, Some remarks on the unrolled quantum group of sl(2). J. Pure Appl. Algebra 219 (2015), no. 8, 3238--3262. 

\bibitem[CGP2]{CGP2} F. Costantino, N. Geer and B. Patureau-Mirand, Quantum invariants of 3-manifolds via link surgery presentations and non-semi-simple categories. J. Topol. 7 (2014), no. 4, 1005–1053.

\bibitem[CGR]{CGR} T. Creutzig, A. M. Gainutdinov and I.~Runkel, work in progress. 

\bibitem[CG1]{CG} T. Creutzig and T. Gannon, The theory of $C_2$-cofinite vertex operator algebras, work in progress, draft available at http://www.ualberta.ca/$\sim$creutzig. 

\bibitem[CG2]{CG2} T. Creutzig and T. Gannon, Logarithmic conformal field theory, log-modular tensor categories and modular forms, arXiv:1605.04630.

\bibitem[CKLR]{CKLR} T. Creutzig, S. Kanade, A. Linshaw and D. Ridout, Schur-Weyl duality for Heisenberg cosets, in preparation. 

\bibitem[CMu]{CMu} F. Costantino, J. Murakami, On SL(2,C) quantum 6j-symbols and its relation to the hyperbolic volume. Quantum Topology 4 (2013), no. 3, 303--351.

\bibitem[CM1]{CM} T. Creutzig and A. Milas, False Theta Functions and the Verlinde formula, Advances in Mathematics, 262 (2014), 520--554.

\bibitem[CM2]{CM2}  T. Creutzig and A. Milas, Higher rank partial and false theta functions and representation theory, preprint.

\bibitem[CMW]{CMW} T. Creutzig, A. Milas and S. Wood, On regularized quantum dimensions of the singlet vertex operator algebra and false theta functions,to appear in IMRN, { arXiv:1411.3282}.

\bibitem[CL]{CL} T. Creutzig and A. Linshaw, Cosets of affine vertex algebras inside larger structures, arXiv:1407.8512.

\bibitem[CR1]{CR1}  T. Creutzig and D. Ridout, Relating the archetypes of logarithmic conformal field theory. Nuclear Phys. B 872 (2013), no. 3, 348–-391.

\bibitem[CR2]{CR2}  T. Creutzig and D. Ridout, Modular data and Verlinde formulae for fractional level WZW models I. Nuclear Phys. B 865 (2012), no. 1, 83–-114.

\bibitem[CR3]{CR3}  T. Creutzig and D. Ridout, Modular data and Verlinde formulae for fractional level WZW models II. Nuclear Phys. B 875 (2013), no. 2, 423–-458.

\bibitem[FT]{FT} B. Feigin and  I. Tipunin, Logarithmic CFTs connected with simple Lie algebras,  arXiv:1002.5047.

\bibitem[GR]{GR} A. M. Gainutdinov and I.~Runkel, {Symplectic fermions and a quasi-Hopf algebra structure on $\bar{U}_i sl(2)$}, {\ arXiv:1503.07695}.

\bibitem[GP]{GP} N. Geer, B. Patureau-Mirand, Polynomial 6j-Symbols and States Sums. Algebraic \& Geometric
Topology 11 (2011) 1821--1860.

\bibitem[GPT1]{GPT} N. Geer, B. Patureau-Mirand, V. Turaev, Modified Quantum Dimensions and Re-Normalized
Link Invariants, Compositio Mathematica, 145 (2009), no. 1, 196--212.

\bibitem[GPT2]{GPT2} N. Geer, B. Patureau-Mirand, V. Turaev, Modified 6j-Symbols and 3-manifold invariants. Adv. Math. 228 (2011), no. 2, 1163--1202.

\bibitem[H1]{H1} Y.-Z. Huang, {Vertex operator algebras and the Verlinde conjecture}, Commun. Contemp. Math. 10 (2008) 103--154.

\bibitem[H2]{H2} Y.-Z. Huang, {Rigidity and modularity of vertex tensor categories}, Commun. Contemp. Math. 10 (2008) 871--911. 

\bibitem[HLZ]{HLZ}  Y.-Z. Huang, J. Lepowsky, L. Zhang, Logarithmic tensor category theory for generalized
modules for a conformal vertex algebra, Parts  I-VIII, arXiv:1012.4193, 1012.4196, 1012.4197,
1012.4198, 1012.4199, 1012.4202, 1110.1929, 1110.1931; Part I published in {\em Conformal Field Theories and Tensor Categories}, Springer, Berlin-Heidelberg, 2014. 169-248.	


\bibitem[IK]{IK}  K. Iohara and Y. Koga. {Representation theory of the Virasoro algebra}. Springer Science, 2010.

\bibitem[KL]{KL} D. Kazhdan and G. Lusztig, Tensor structures arising from affine Lie algebras, I-IV, J. Amer. Math. Soc. 6 (1993), 905--947,  949--1011, 
and 7 (1994), 335--381,  383--453.

\bibitem[KS]{KS} J. Kondo and Y. Saito, Indecomposable decomposition of tensor products of modules over the
restricted quantum universal enveloping algebra associated to $sl_2$, J. Algebra 330 (2011) 103--129.

\bibitem[Ly1]{Ly1} V. Lyubashenko, {Modular transformations for tensor categories}. J. Pure Appl. Algebra 98 (1995), no. 3, 279--327. 

\bibitem[Ly2]{Ly2} V. Lyubashenko, {Modular properties of ribbon abelian categories}, arXiv:hep-th/9405168.

\bibitem[Mi]{Mil1} A. Milas, Logarithmic intertwining operators and vertex operators, Comm. Math. Phys.  277 (2008) 497--529.

\bibitem[Miy1]{Mi1} M. Miyamoto, {$C_1$-cofiniteness and Fusion Products of Vertex Operator Algebras}, arXiv:1305.3008.

\bibitem[Miy2]{Mi} M. Miyamoto, {Modular invariance of vertex operator algebra satisfying $C_2$-cofiniteness}, Duke Math. J. 122 (2004) 51--91.

\bibitem[Mu1]{M1} J. Murakami, Colored Alexander invariants and cone-manifolds. Osaka J. Math. 45 (2008), no. 2, 541-–564.

\bibitem[Mu2]{M2} J. Murakami, From colored Jones invariants to logarithmic invariants.  arXiv:1406.1287.

\bibitem[MN]{MN} J. Murakami and K. Nagatomo, Logarithmic knot invariants arising from restricted quantum groups. Internat. J. Math. 19 (2008), no. 10, 1203--1213.

\bibitem[Ru]{Ru} I. Runkel, A braided monoidal category for free super-bosons, J.Math.Phys. 55 (2014), no. 4, 041702, 59 pp.

\bibitem[R]{R} M. Rupert, Master Thesis at the University of Alberta. 

\bibitem[T]{T} V. G. Turaev, Quantum Invariants of Knots and 3-manifolds, Volume 18 of De Gruyter studies in mathematics.



\end{thebibliography}
\end{document}